\newcommand\1{\lower 9pt\hbox{\underbar{}}}
\numberwithin{equation}{section}
\newtheorem {theorem} {Theorem} [section]
\newtheorem {lemma}[theorem]{Lemma}
\newtheorem {prop}[theorem]     {Proposition}
\newtheorem {corollary}[theorem]       {Corollary}
\theoremstyle{definition}
\newtheorem {defi}[theorem]{Definition}
\newtheorem {Remark}[theorem]          {Remark}
\newtheorem* {theorem*}{Theorem}
\newtheorem {Example}[theorem]         {Example}
\newcommand{\pr} {\smallskip\noindent{\bf Proof\,\,}}
\def\R{\mathbb{R}}
\def\Z{\mathbb{Z}}
\newcommand{\pp}[2]{\frac{\partial#1}{\partial#2}}
\newcommand{\norm}[1]{\|#1\|}
\title{Constructing Turing complete Euler flows in dimension $3$}
\author{Robert Cardona}\address{ Robert Cardona,
Laboratory of Geometry and Dynamical Systems, Department of Mathematics, Universitat Polit\`{e}cnica de Catalunya and BGSMath Barcelona Graduate School of
Mathematics,  Avinguda del Doctor Mara\~{n}on 44-50, 08028 , Barcelona  \it{e-mail: robert.cardona@upc.edu }
 }
 \thanks{Robert Cardona acknowledges financial support from the Spanish Ministry of Economy and Competitiveness, through the Mar\'ia de Maeztu Programme for Units of Excellence in R\& D (MDM-2014-0445) via an FPI grant.}
\author{Eva Miranda}\address{ Eva Miranda,
Laboratory of Geometry and Dynamical Systems $\&$ Institut de Matemàtiques de la UPC-BarcelonaTech (IMTech), Department of Mathematics, EPSEB, Universitat Polit\`{e}cnica de Catalunya BGSMath Barcelona Graduate School of
Mathematics in Barcelona and
\\ IMCCE, CNRS-UMR8028, Observatoire de Paris, PSL University, Sorbonne
Universit\'{e}, 77 Avenue Denfert-Rochereau,
75014 Paris, France \it{e-mail: eva.miranda@upc.edu, eva.miranda@icmat.es }
 }
\thanks{Robert Cardona and Eva Miranda are partially supported by the grants MTM2015-69135-P/FEDER and PID2019-103849GB-I00 / AEI / 10.13039/501100011033, and AGAUR grant 2017SGR932. Eva Miranda is supported by the Catalan Institution for Research and Advanced Studies via an ICREA Academia Prize 2016.}
\author{Daniel Peralta-Salas} \address{Daniel Peralta-Salas, Instituto de Ciencias Matem\'aticas-ICMAT, C/ Nicol\'{a}s Cabrera, nº 13-15 Campus de Cantoblanco, Universidad Aut\'{o}noma de Madrid,
28049 Madrid, Spain \it{e-mail: dperalta@icmat.es} }
\thanks{Daniel Peralta-Salas is supported by the grants MTM PID2019-106715GB-C21 (MICINN) and Europa Excelencia EUR2019-103821 (MCIU)}
\author{Francisco Presas} \address{Francisco Presas, Instituto de Ciencias Matem\'aticas-ICMAT, C/ Nicol\'{a}s Cabrera, nº 13-15 Campus de Cantoblanco, Universidad Aut\'{o}noma de Madrid,
28049 Madrid, Spain \it{e-mail: fpresas@icmat.es} }
\thanks{Francisco Presas is supported by the grant reference number MTM2016-79400-P (MINECO/FEDER). This work was partially supported by the ICMAT--Severo Ochoa grant CEX2019-000904-S}
\begin{document}

\begin{abstract}
Can every physical system simulate any Turing machine? This is a classical problem which is intimately connected with the \emph{undecidability} of certain physical phenomena. Concerning fluid flows, Moore asked in~\cite{Mo} if hydrodynamics is capable of performing computations. More recently, Tao launched a programme based on the Turing completeness of the Euler equations to address the blow up problem in the Navier-Stokes equations. In this direction, the undecidability of some physical systems has been studied in recent years, from the quantum gap problem~\cite{perez} to quantum field theories~\cite{F}. To the best of our knowledge, the existence of undecidable particle paths of 3D fluid flows has remained an elusive open problem since Moore's works in the early 1990's. In this article we construct a Turing complete stationary Euler flow on a Riemannian $\mathbb S^3$ and speculate on its implications concerning Tao's approach to the blow up problem in the Navier-Stokes equations.
\end{abstract}

\maketitle

\section{Introduction}\label{S1}

In the book \emph{The Emperor's new mind}~\cite{penrose}  Roger Penrose returns to the artificial intelligence debate to convince us that creativity cannot be presented as the output of a ``mind'' representable as a Turing machine. This idea, which is platonic in nature and highly philosophical, evolves into more tangible questions such as: \emph{What kind of physics might be non-computational?}.

The ideas of the book are a source of inspiration and can be taken to several landscapes and levels of complexity: Is hydrodynamics capable of performing computations? (Moore~\cite{Mo}). Given the Hamiltonian of a quantum many-body
system, is there an algorithm to check if it has a spectral gap? (this is known as \emph{the spectral gap problem}, recently proved to be undecidable~\cite{perez}). And last but not least, can a mechanical system (including a fluid flow) simulate a universal Turing machine (\emph{universality})? (Tao~\cite{T1,T2, T3}).

This last question has been analyzed related to the conjecture of the regularity of the Navier-Stokes equations~\cite{T0}, which is one of the unsolved problems in the Clay's millennium list. In~\cite{TNat} Tao suggests a connection between a potential blow-up of the Navier-Stokes equations and Turing completeness and fluid computation. It is interesting to mention that another of the one million dollars problem on the same list whose resolution is still pending is the \emph{$P$ versus $NP$ problem}, which concerns the complexity of systems. Grosso modo, the question is if any problem whose solution can be  \emph{verified} by an algorithm polynomial in time (``of type $NP$'') can also be \emph{solved} by another algorithm polynomial in time (``of type $P$''). The delicate distinction between verification and solution has opened up an intricate scenery combining research in theoretical computer science, physics and mathematics. Although there is no apparent relation between these two celebrated problems, understanding a fluid flow as a Turing machine may shed some light on their connection.

On the other hand, undecidability of systems is everywhere and also on the invisible fine line between geometry and physics:
As proven by Freedman~\cite{F} non-abelian topological quantum field theories exhibit the mathematical features (combinatorics) necessary to support an NP-hard model. This relates topological quantum field theory and the Jones polynomial (as described by Witten~\cite{W}) to the $P\neq NP$ problem. Other undecidable problems on the crossroads of geometry and physics are the stability of an $n$-body system~\cite{Mo2}, the problem of finding an Einstein metric for a fixed $4$-fold as observed by Wolfram~\cite{Wo}, ray tracing problems in 3D optical systems~\cite{RTY}, or neural networks~\cite{S95}. Fundamental questions at the heart of low dimensional geometry and topology such as verifying the equivalence of two finitely specified 4--manifolds~\cite{Wo} or the problem of computing the genus of a knot~\cite{AHT} have also been proven to be undecidable and $NP$-hard problems, respectively.

In this article we address the appearance of undecidable phenomena in fluid dynamics proving the existence of Turing complete fluid flows on a Riemannian 3-dimensional sphere. Our novel strategy fusions the computational power of symbolic dynamics with fine techniques in contact topology and its connection with hydrodynamics unveiled by Sullivan, Etnyre and Ghrist more than two decades ago. The type of flows that we consider are stationary solutions to the Euler equations, which describe the dynamics of an inviscid incompressible flow in equilibrium. Its close companion, the Navier-Stokes equations, describe the dynamics of the viscid case. We end up this article discussing an application to these equations.
\\

%
\textbf{Acknowledgements:} We are thankful to Robert Ghrist, Cristopher Moore, David Pérez-García and Leonid Polterovich for their feedback and suggestions on a former version of this article.

\section{Euler equations and Beltrami fields}\label{S.introEuler}
Euler equations model the dynamics of an incompressible fluid flow without viscosity. Even if they are classically considered on $\mathbb{R}^3$, they can be formulated on any $3$-dimensional Riemannian manifold $(M,g)$. The $\operatorname{div}$ and $\operatorname{curl}$ operators follow the same mnemonics as the classical ones in general coordinates from vector calculus (for an introduction to the topic see~\cite{AK,peralta}). The equations read as:
\begin{equation*}
\begin{cases}
\frac{\partial}{\partial t} X + \nabla_X X &= -\nabla p\,, \\
\operatorname{div}X=0\,,
\end{cases}
\end{equation*}
\noindent where $p$ stands for the inner pressure and $X$ is the velocity field of the fluid (a non-autonomous vector field on $M$). Here $\nabla_X X$ denotes the covariant derivative of $X$ along $X$.
A solution to the Euler equations is called stationary whenever $X$ does not depend on time, i.e., $\frac{\partial}{\partial t} X=0$. As shown by the celebrated Arnold's structure theorem~\cite{AK}, among stationary solutions, Beltrami fields play a central role. We say that a divergence-free vector field $X$ on $(M,g)$ is Beltrami if
$$ \operatorname{curl}X=fX\,,$$
with $f \in C^\infty (M)$. When $f$ is non-vanishing, we call those vector fields \emph{rotational}. Hopf fields on $\mathbb S^3$ and ABC flows on $\mathbb T^3$ are examples of rotational (actually with constant factor) Beltrami fields.

The Euler equations can be defined in higher dimensions~\cite{AK}, an extension that is very useful to show that the steady Euler flows exhibit remarkable universality features. In~\cite{CMPP} we established a sort of universality of the stationary Euler flows by proving that any non-autonomous dynamics can be embedded into a steady Euler flow of high dimension. To achieve this, we used stationary solutions of Beltrami type (which can be defined on any odd dimensional manifold) and a remarkable connection with contact geometry~\cite{EG}: a correspondence principle between Beltrami fields and Reeb flows of contact structures in arbitrary dimension, which we describe in the next section. In our construction we also benefited from the $h$-principle in contact topology via novel Reeb embedding theorems which are key to establish the universality properties, see~\cite{CMPP} for details.

\section{Viewing fluid flows through a contact mirror}

A contact structure on an odd dimensional manifold $M^{2n+1}$ is determined by a hyperplane distribution $\xi$ given (at least locally) by the kernel of a one form $\alpha$ such that $\alpha\wedge (d\alpha)^n\neq 0$ (condition known as \emph{maximal non-integrability}). We will assume that the distribution is co-oriented (i.e., its normal bundle is oriented). This condition is equivalent to having a global one form defining the contact structure (called a defining contact form). For a fixed contact form we define its associated Reeb field $R$ by the equations $\alpha(R)=1$,  $\iota_{R}d\alpha=0$.
Contact geometry is often seen as the odd dimensional analogue of symplectic geometry. Indeed, symplectic and contact manifolds are related by several constructions. In particular, the contactization of an exact symplectic manifold $(M, d\lambda)$ is defined as the manifold $\R\times M$ equipped with the contact structure $\xi_{\lambda}= \ker(dt +\lambda)$. A key result in contact geometry is the existence of a Darboux theorem: the only local invariant of a contact structure is the dimension. The most simple proof is given by the following path method result in the contact realm:

\begin{theorem}[Gray stability theorem~\cite{Gray}]\label{thm:Gray}
Let $\xi_t$, $t\in [0,1]$, be a smooth homotopy of contact structures on a closed contact manifold $M$. Then there is an isotopy $\psi_t$ of $M$ such that $\psi_t^*\xi_0=\xi_t$ for each $t\in [0,1]$.
 \newline
Moreover, if the family is constant in the complement of a compact set $K$, then the diffeomorphisms $\psi_t$ are the identity away from $K$.
\end{theorem}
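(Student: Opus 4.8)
The plan is to run Moser's deformation (path) argument adapted to the contact setting. First, using the co-orientation hypothesis, choose a smooth family of contact forms $\alpha_t$, $t\in[0,1]$, with $\ker\alpha_t=\xi_t$; on the open region where $\xi_t$ is independent of $t$ we take $\alpha_t$ to be independent of $t$ as well. We seek the isotopy as the flow of a time-dependent vector field $X_t$, i.e. $\tfrac{d}{dt}\psi_t=X_t\circ\psi_t$ with $\psi_0=\mathrm{id}$, and we aim to arrange $\psi_t^*\alpha_t=\lambda_t\,\alpha_0$ for a smooth positive function $\lambda_t$ on $M$; this is exactly what is needed to obtain $\psi_t^*\xi_t=\xi_0$, and replacing $\psi_t$ by $\psi_t^{-1}$ then yields the form $\psi_t^*\xi_0=\xi_t$ in the statement.

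Differentiating in $t$ and using Cartan's formula, $\tfrac{d}{dt}(\psi_t^*\alpha_t)=\psi_t^*\big(\dot\alpha_t+d\iota_{X_t}\alpha_t+\iota_{X_t}d\alpha_t\big)$, while $\tfrac{d}{dt}(\lambda_t\alpha_0)=(\mu_t\circ\psi_t)\,\psi_t^*\alpha_t$ with $\mu_t:=(\tfrac{d}{dt}\log\lambda_t)\circ\psi_t^{-1}$. Hence it suffices to solve, for each $t$, the equation $\dot\alpha_t+d\iota_{X_t}\alpha_t+\iota_{X_t}d\alpha_t=\mu_t\,\alpha_t$ on $M$. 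The key ansatz is to require $X_t\in\xi_t=\ker\alpha_t$, so the exact term drops out and we are left with $\dot\alpha_t+\iota_{X_t}d\alpha_t=\mu_t\,\alpha_t$. Pairing this with the Reeb field $R_t$ (recall $\alpha_t(R_t)=1$ and $\iota_{R_t}d\alpha_t=0$, and note $d\alpha_t(X_t,R_t)=-(\iota_{R_t}d\alpha_t)(X_t)=0$) forces $\mu_t=\dot\alpha_t(R_t)$; integrating then determines $\lambda_t=\exp\big(\int_0^t\mu_s\circ\psi_s\,ds\big)>0$. With $\mu_t$ fixed, set $\beta_t:=\mu_t\alpha_t-\dot\alpha_t$; one checks $\beta_t(R_t)=0$, and since the contact condition $\alpha_t\wedge(d\alpha_t)^n\neq 0$ means $d\alpha_t$ restricts to a nondegenerate $2$-form on $\xi_t$, there is a unique $X_t\in\xi_t$ with $\iota_{X_t}(d\alpha_t|_{\xi_t})=\beta_t|_{\xi_t}$, and since $TM=\xi_t\oplus\mathbb{R}R_t$ and $(\iota_{X_t}d\alpha_t)(R_t)=0=\beta_t(R_t)$, this $X_t$ solves the full equation. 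It depends smoothly on $t$.

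Because $M$ is closed, the (time-dependent) flow $\psi_t$ of $X_t$ is defined for all $t\in[0,1]$. Substituting back, $\tfrac{d}{dt}(\psi_t^*\alpha_t)=\psi_t^*(\iota_{X_t}d\alpha_t+\dot\alpha_t)=\psi_t^*(\mu_t\alpha_t)=(\mu_t\circ\psi_t)\,\psi_t^*\alpha_t$, which is a pointwise linear ODE for the $1$-form $\psi_t^*\alpha_t$ with initial value $\alpha_0$; its solution is $\psi_t^*\alpha_t=\lambda_t\,\alpha_0$, whence $\psi_t^*\xi_t=\xi_0$. For the relative statement: on the open set where $\alpha_t$ was chosen constant in $t$ we have $\dot\alpha_t=0$, so $\mu_t=0$, so $\beta_t=0$, so $X_t=0$ there by uniqueness, and therefore $\psi_t$ is the identity on that set — in particular away from $K$.

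The main point — more a matter of finding the right ansatz than a technical difficulty — is the choice $X_t\in\ker\alpha_t$ together with reading off $\mu_t$ from the Reeb direction: this is what turns the cohomological equation into a pointwise-solvable linear equation using only the nondegeneracy of $d\alpha_t$ on $\xi_t$, and it is special to the contact setting. The remaining ingredients (smooth choice of the family $\alpha_t$ from co-orientation, global existence of the flow from compactness of $M$, and the elementary ODE producing $\lambda_t$) are routine.
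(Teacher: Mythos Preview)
The paper does not prove this theorem: it is quoted as the classical Gray stability theorem with a reference to~\cite{Gray}, and is used only as a tool (in Step~2 of the proof of Theorem~\ref{prop:Reebdisk}). Your argument is the standard Moser-path proof of Gray stability and is correct, including the relative version: choosing $X_t\in\ker\alpha_t$, determining $\mu_t=\dot\alpha_t(R_t)$ from the Reeb direction, and then solving $\iota_{X_t}d\alpha_t|_{\xi_t}=\beta_t|_{\xi_t}$ via nondegeneracy of $d\alpha_t$ on $\xi_t$ is exactly the textbook approach. The only small point worth flagging is the choice of a \emph{global} smooth family $\alpha_t$ that is constant on the complement of $K$; this is unproblematic (fix $\alpha_0$, write any smooth family as $\alpha_t=f_t\alpha_0$ on $M\setminus K$ and renormalize, or simply note that co-orientation gives a smooth choice that can be taken constant where $\xi_t$ is), but it is the one place where an extra word might be warranted.
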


There is a surprising connection between contact geometry and hydrodynamics: in short, any non-vanishing Beltrami field can be reparametrized as the Reeb vector field of a contact structure. This geometrical discovery was suggested by Sullivan and proved by Etnyre and Ghrist~\cite{EG}.

In order to understand this remarkable correspondence it is convenient to rewrite the Euler equations in a \emph{dual language}. Duality is given by contraction with the Riemannian metric $g$. With the one form $\alpha$ defined as
$\alpha:=g(X,\cdot)$ and the Bernoulli function as $B:=p+\frac{1}{2}g(X,X)$, the steady Euler equations become,
\begin{equation*}
\begin{cases}
\iota_X d\alpha=-dB\,, \\
d\iota_X\mu=0\,,
\end{cases}
\end{equation*}
where $\mu$ is the Riemannian volume form.

Observe that:
\begin{itemize}

\item The equation $ \operatorname{curl}X=fX, \text{ with } f \in C^\infty (M) $, satisfied by a Beltrami vector field on a $3$-manifold, can be equivalently written as {$ d\alpha = f \iota_X \mu\,. $} Assume that $X$ is rotational, e.g., $f>0$, then if $X$ does not vanish on $M$ we infer that

$$\alpha \wedge d\alpha= f \alpha \wedge \iota_X \mu > 0\,,$$

thus proving that $\alpha$ defines a contact structure on $M$.

\item Obviously, $X$ satisfies {$\iota_X (d\alpha)= \iota_X \iota_X \mu=0$}, so $X \in \ker d\alpha$  and therefore it is a reparametrization of the Reeb vector field by the function $\alpha(X)= g(X,X)$, i.e,. $R=\frac{X}{\alpha(X)}$.
\end{itemize}

These observations prove one of the implications of the following theorem, which is due to Etnyre and Ghrist~\cite{EG}. This result will be a key instrumental tool to construct a Turing complete Euler flow on a Riemannian $\mathbb S^3$ in Section~\ref{S.Euler}.

\begin{theorem}\label{correspondence}
Any non-vanishing rotational Beltrami field is a reparametrization of a Reeb vector field for some contact form. Conversely, any reparametrization of a Reeb vector field of a contact structure is a non-vanishing rotational Beltrami field for some Riemannian metric and volume form.
\end{theorem}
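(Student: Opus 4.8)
The first implication was already established in the excerpt, so the plan is to prove the converse: given a Reeb field $R$ of a contact form $\alpha$ on a $3$-manifold $M$, and any reparametrization $X = hR$ with $h$ a nowhere-zero function, exhibit a metric $g$ and volume form $\mu$ making $X$ a non-vanishing rotational Beltrami field. Since $R$ never vanishes, $X$ never vanishes. The natural move is to build the metric from the contact data: choose $g$ so that $\alpha = g(R,\cdot)$ up to the reparametrization factor, i.e. so that $g(X,\cdot) = h\,\alpha$ as one-forms (this is consistent because $\alpha(X) = h\,\alpha(R) = h$, so we will want $g(X,X) = h^2$). Concretely, one can pick any complement of $\ker\alpha$ by $\langle R\rangle$, declare $R\perp\ker\alpha$, set $|R|_g^2$ to a value to be fixed below, and put any metric on the symplectic plane bundle $(\ker\alpha, d\alpha)$. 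One then rescales: replacing $g$ by a conformal multiple and adjusting the norm of $R$ gives the freedom needed to match $\mu$ to the curl equation.

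The key computation is to translate ``Beltrami'' into the dual language used in the excerpt, namely $\operatorname{curl} X = fX \iff d(g(X,\cdot)) = f\,\iota_X\mu$. With $g(X,\cdot)=h\alpha$ this reads $d(h\alpha) = f\,\iota_X\mu$, i.e. $dh\wedge\alpha + h\,d\alpha = f\,\iota_{hR}\mu = fh\,\iota_R\mu$. Now choose the volume form to be $\mu := \alpha\wedge d\alpha$ (times a positive constant if one wants a normalization); then $\iota_R\mu = \iota_R(\alpha\wedge d\alpha) = (\iota_R\alpha)\,d\alpha - \alpha\wedge(\iota_R d\alpha) = d\alpha$, using $\alpha(R)=1$ and $\iota_R d\alpha = 0$. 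So the right-hand side is $fh\,d\alpha$ and the Beltrami condition becomes $dh\wedge\alpha + h\,d\alpha = fh\,d\alpha$. Taking $f := 1/h\cdot(\text{something})$ is too naive because of the $dh\wedge\alpha$ term; instead observe that if we had started from the unreparametrized field $R$ itself (i.e. $h\equiv 1$), the condition would be exactly $d\alpha = f\,d\alpha$, satisfied by $f\equiv 1$, so $R$ is Beltrami with factor $1$ for the metric making $g(R,\cdot)=\alpha$ and $\mu=\alpha\wedge d\alpha$. For a general reparametrization $X=hR$, one replays the same argument with the rescaled contact form: note $X$ is the Reeb field of the contact form $\alpha' := \alpha/h$ only when $d$ of $\alpha/h$ kills $X$, which need not hold; the clean fix is the one actually used in~\cite{EG} — absorb $h$ into the metric. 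Pick $g$ with $g(X,\cdot) = \alpha$ exactly (possible since $X$ is nonvanishing: set $g(X,X)=1$, $X\perp\ker\alpha$, arbitrary metric on $\ker\alpha$), and set $\mu := \alpha\wedge d\alpha$. Then $\operatorname{curl} X$ is defined by $\iota_{\operatorname{curl}X}\mu = d(g(X,\cdot)) = d\alpha$, while $\iota_X\mu = \iota_X(\alpha\wedge d\alpha)$; here $\alpha(X)=1$ and $\iota_X d\alpha$ need not vanish, so $\iota_X\mu = d\alpha - \alpha\wedge\iota_X d\alpha$. Thus $\operatorname{curl}X = X$ would require $\alpha\wedge\iota_X d\alpha = 0$, i.e. $\iota_X d\alpha \in \langle\alpha\rangle$; but $\iota_X(\iota_X d\alpha)=0$ forces $\iota_X d\alpha = 0$, contradiction unless $X$ is genuinely Reeb.

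So the honest plan must keep track of the reparametrization properly: write $X = hR$ and do \emph{not} demand $g(X,\cdot)=\alpha$; instead demand $g(X,\cdot) = h\,\alpha$ and simultaneously choose $\mu$ adapted to $h$. From the displayed identity $dh\wedge\alpha + h\,d\alpha = f\,\iota_X\mu$, I would pick $\mu$ so that $\iota_X\mu$ equals $\tfrac{1}{f}(dh\wedge\alpha + h\,d\alpha)$ for a suitable nowhere-zero $f$. Since $\iota_X\mu$ must be a closed-under-$\iota_X$ $2$-form (automatically, as $\iota_X\iota_X\mu=0$) and must satisfy $\alpha\wedge\iota_X\mu \neq 0$ at each point (so that $\mu$ is a volume form: indeed $h\,\mu = \alpha\wedge\iota_X\mu$ up to the pairing, because $\iota_X(\alpha\wedge\iota_X\mu) = h\,\iota_X\mu - \alpha\wedge 0$... ), one checks $\iota_X(dh\wedge\alpha + h\,d\alpha) = (Xh)\,\alpha - h\,dh + h\,\iota_X d\alpha$... this is getting delicate, and this is exactly where the real work lies. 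The cleanest route, and the one I would write up, is: first prove the statement for $h\equiv 1$ (the Reeb case) by the explicit choice $g(R,\cdot)=\alpha$ on $\langle R\rangle\oplus\ker\alpha$ and $\mu = \alpha\wedge d\alpha$, giving $\operatorname{curl}R = R$; then for general $h$, set $\widetilde\mu := h^{-1}\mu = h^{-1}\,\alpha\wedge d\alpha$ and $\widetilde g$ the conformal/rescaled metric with $\widetilde g(X,\cdot) = h\,\alpha$, and recompute: $\operatorname{curl}_{\widetilde\mu} X$ is defined by $\iota_{\operatorname{curl}_{\widetilde\mu}X}\widetilde\mu = d(\widetilde g(X,\cdot)) = d(h\alpha)$, and one verifies $d(h\alpha) = f\,\iota_X\widetilde\mu$ with $f = h^{-1}$ after checking the divergence condition $d\iota_X\widetilde\mu = 0$ holds for this choice. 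The main obstacle, and the step deserving care, is precisely this bookkeeping: ensuring that the rescaled volume form $\widetilde\mu$ is genuinely a volume form (non-degenerate everywhere), that $X$ is divergence-free with respect to it, and that the proportionality factor $f$ comes out nowhere-vanishing — all three hinge on $h$ being nowhere zero, which is given. Once the $h\equiv 1$ case is clean, the general case is a rescaling argument, but I expect the referee-proof version to require writing out $\iota_X(\alpha\wedge d\alpha)$ and $d(h^{-1}\alpha\wedge d\alpha)$ explicitly to confirm everything closes up.
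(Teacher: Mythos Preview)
The paper does not prove the converse direction at all; it establishes only the forward implication via the two bullet-point observations preceding the theorem statement, and attributes the full result to Etnyre--Ghrist~\cite{EG}. So there is no proof in the paper to compare your converse argument against.

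That said, your attempt at the converse contains a concrete error that derails it. In your second approach you set $g(X,\cdot)=\alpha$ and $\mu=\alpha\wedge d\alpha$, and then write ``here $\alpha(X)=1$ and $\iota_X d\alpha$ need not vanish''. Both assertions are false: since $X=hR$ one has $\alpha(X)=h\,\alpha(R)=h$, and crucially
\[
\iota_X d\alpha \;=\; h\,\iota_R d\alpha \;=\; 0
\]
because $R$ is the Reeb field. Hence $\iota_X\mu = h\,d\alpha - \alpha\wedge 0 = h\,d\alpha$, and the curl equation $d\alpha = f\,\iota_X\mu$ is solved by $f=h^{-1}$, which is nowhere zero. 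The only genuine obstruction with this choice is the divergence condition, since $d\iota_X\mu = dh\wedge d\alpha$ need not vanish. That is repaired not by changing the dual one-form but by rescaling the volume: take $\mu = h^{-1}\alpha\wedge d\alpha$ (assuming $h>0$, which one may arrange by replacing $\alpha$ with $-\alpha$ if necessary). Then $\iota_X\mu = d\alpha$ on the nose, so $d\iota_X\mu=0$ and $d(g(X,\cdot))=d\alpha=\iota_X\mu$, giving $\operatorname{curl}X=X$. If one insists that $\mu$ be the Riemannian volume, one uses the remaining freedom in $g|_{\ker\alpha}$ (a conformal factor there) to match it, without disturbing $g(X,\cdot)=\alpha$.

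Your third attempt, by contrast, cannot be salvaged as written: with $\widetilde g(X,\cdot)=h\alpha$ and $\widetilde\mu=h^{-1}\alpha\wedge d\alpha$ you still get $\iota_X\widetilde\mu=d\alpha$, so the equation $d(h\alpha)=f\,\iota_X\widetilde\mu$ reads $dh\wedge\alpha + h\,d\alpha = f\,d\alpha$, which forces $dh\wedge\alpha=0$ and hence fails for generic $h$. The $dh\wedge\alpha$ term is unavoidable once you take $h\alpha$ as the dual one-form. The moral is that the correct dual is $\alpha$ itself, precisely the choice you abandoned because of the mistaken computation of $\iota_X d\alpha$.
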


Beyond this characterization of (non-vanishing, rotational) Beltrami fields in terms of contact geometry, a natural question is whether a general volume preserving vector field is a solution to the Euler equations for some Riemannian metric. More concretely, to address this problem we introduced in~\cite{PRT} the following definition:

\begin{defi}
Let $M$ be a (not necessarily closed) manifold endowed with a volume form $\mu$. A volume preserving vector field $X$ is \emph{Eulerisable} if there is a metric $g$ on $M$ for which $X$ satisfies the stationary Euler equations for some Bernoulli function $B:M \rightarrow \R$.
\end{defi}

Since the Eulerisable fields are stationary solutions of the Euler equations on some $(M,g)$, they exist for all time. With this definition one gets rid of the metric (which is no longer fixed as in the standard setting of the Euler equations), which allows one to exploit the enormous geometric wealth of the Euler equations. The contact mirror stated in Theorem~\ref{correspondence}, which also holds on manifolds with boundary~\cite{CMP}, is just the first instance of a series of striking results that establish connections with geodesible flows~\cite{Re,CV} or Sullivan's theory of foliated cycles~\cite{PRT}. Moreover, as mentioned in Section~\ref{S.introEuler}, the Eulerisable flows have proved to be flexible enough to encode any non-autonomous dynamics by increasing the dimension of the ambient manifold (``universality''), see~\cite{CMPP}.

\section{An excursion to contact topology}\label{S:contact}

The goal of this section is to prove the following result in contact topology, which is a key ingredient for the proof of the main result in this article. All along this section $D_{\rho}$ is a $2$-dimensional disk of radius $\rho$. If $\rho=1$ we just omit it to write $D$.

\begin{theorem}\label{prop:Reebdisk} Let $(M,\xi)$ be a contact $3$-manifold and $\varphi:D \rightarrow D$ an area-preserving diffeomorphism of the disk which is the identity (in a neighborhood of) the boundary. Then there exists a defining contact form $\alpha$ whose associated Reeb vector field $R$ exhibits a Poincar\'e section with first return map conjugated to $\varphi$.
\end{theorem}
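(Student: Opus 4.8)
The plan is to build an explicit contact form on a solid torus whose Reeb flow is a suspension of $\varphi$, to recognize its underlying contact structure as a standard tight one, and then to implant this local model into $(M,\xi)$ near a transverse unknot. Since the statement only asks for a return map \emph{conjugate} to $\varphi$, I am free to pre- and post-compose with diffeomorphisms, and I will use this repeatedly.

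First I would produce a convenient presentation of $\varphi$. After a preliminary conjugation I may assume that $\varphi$ preserves the standard area form on the open disk $D_{1/2}$ of radius $\tfrac12$ and is the identity outside a compact subset of it. Since an area-preserving diffeomorphism of a disk that is the identity near the boundary lies in the identity component of the group of such diffeomorphisms, $\varphi$ is the time-one holonomy of the flow of a time-dependent Hamiltonian vector field; reparametrizing time I obtain a smooth function $\tilde H$ on $D_2\times(\mathbb{R}/\mathbb{Z})_t$ supported in $D_{1/2}\times(\tfrac14,\tfrac34)$ whose Hamiltonian vector field $X_{\tilde H_t}$ has holonomy from $t=0$ to $t=1$ equal to $\varphi$. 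On $N:=D_2\times(\mathbb{R}/\mathbb{Z})$ I then consider the $1$-form $\alpha:=(1+C+\tilde H_t)\,dt+\lambda_0$ with $\lambda_0=\tfrac12(x\,dy-y\,dx)$ and $C>0$ a constant to be fixed. One checks that $d\alpha$ differs from the area form $\omega_0=dx\wedge dy$ only by a term $dt\wedge d_D\tilde H_t$; that $\alpha\wedge d\alpha$ equals $dt$ wedged with $\omega_0$ times $(1+C)$ plus a bounded, compactly supported $2$-form, hence is a positive volume form as soon as $C$ is large; and that for $R_0:=\partial_t+X_{\tilde H_t}$ one has $\iota_{R_0}d\alpha=0$ and $\alpha(R_0)>0$ (using $dt(X_{\tilde H_t})=0$ and $d_D\tilde H_t(X_{\tilde H_t})=0$). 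Thus, for $C$ large, $\alpha$ is a contact form whose Reeb field is the positive reparametrization $R_0/\alpha(R_0)$ of $R_0$. Because $\tilde H$ is supported in $D_{1/2}$, every $R_0$-orbit issuing from $D_1\times\{0\}$ stays in $D_1\times(\mathbb{R}/\mathbb{Z})$ and returns to $D_1\times\{0\}$ with first return map $\varphi$; and near $\partial N$ one has $\alpha=(1+C)\,dt+\lambda_0$.

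Next I would recognize $\ker\alpha$ and implant it. The forms $\alpha_s:=(1+C+s\tilde H_t)\,dt+\lambda_0$, $s\in[0,1]$, are all contact for one large $C$ and all agree with $(1+C)\,dt+\lambda_0$ near $\partial N$, so the relative form of Gray stability (Theorem~\ref{thm:Gray}) provides a diffeomorphism of $N$, equal to the identity near $\partial N$, carrying $\ker\alpha_0$ to $\ker\alpha$. A rescaling of the disk coordinate identifies $\ker\alpha_0=\ker\!\bigl(dt+\tfrac{1}{1+C}\lambda_0\bigr)$ with the standard model $\ker(dt+r^2\,d\theta)$ of a tubular neighborhood of a transverse knot, so $(N,\ker\alpha)$ is contactomorphic to such a neighborhood. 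Every contact $3$-manifold contains one: take a Darboux ball in $M$, written $\ker(dz+r^2\,d\psi)$ in cylindrical coordinates, and the transverse circle $\{z=0,\ r=r_0\}$ inside it; choosing it of the appropriate size, its standard neighborhood is contactomorphic to $(N,\ker\alpha)$, which gives a contact embedding $e\colon(N,\ker\alpha)\hookrightarrow(M,\xi)$. Finally I fix any defining form $\beta$ for $\xi$ on $M$, write $e_*\alpha=g\beta$ on $e(N)$ with $g>0$, choose a cutoff $\chi$ supported in $e(N)$ and identically $1$ on $e\bigl(D_{3/2}\times(\mathbb{R}/\mathbb{Z})\bigr)$, and set $\alpha_M:=(\chi g+1-\chi)\beta$. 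Then $\alpha_M$ is a defining contact form for $\xi$ which coincides with $e_*\alpha$ on a region containing all Reeb orbits through $e(D_1\times\{0\})$, so the Reeb flow of $\alpha_M$ admits the Poincar\'e section $e(D_1\times\{0\})$ with first return map conjugate to $\varphi$.

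The hard part is the local model: the Reeb flow there must be \emph{exactly} the suspension of $\varphi$, while the underlying contact structure must stay tame enough — contactomorphic to the standard tight solid torus — to be placed inside an arbitrary $(M,\xi)$. The device reconciling the two is the large constant $C$, equivalently a very long circle factor: it forces the $\lambda_0$-twisting to dominate the Hamiltonian perturbation, so that both the contact condition and the Gray-stability homotopy back to the untwisted model survive, at no cost to the return map. The remaining ingredients — the Hamiltonicity of area-preserving disk maps, the transverse-knot neighborhood theorem, and the cutoff gluing of contact forms — are standard.
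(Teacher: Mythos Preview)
Your proof is correct and follows essentially the same strategy as the paper: both build the Hamiltonian-suspension contact form $(C+\widetilde H)\,dt+\lambda$ on a solid torus with a large constant $C$, implant it near a transverse knot, and invoke Gray stability. The only organizational difference is that you apply Gray stability \emph{locally} on the solid torus (to identify $\ker\alpha$ with the standard model before embedding, then glue via a cutoff so that your $\alpha_M$ already defines $\xi$), whereas the paper first glues the form by matching it exactly to $\beta_0$ on the boundary of the torus and \emph{then} applies Gray stability globally to the homotopy $a_t=((1-t)\widetilde H'+C)d\theta+Cr'^2 d\phi$; this is a harmless reordering of the same ingredients.
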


Combining this result with the contact/Beltrami correspondence (Theorem~\ref{correspondence}) we obtain a metric $g$ on $M$ for which $R$ is a Beltrami field. This yields the following corollary:

\begin{corollary}\label{prop:BeltDisk}
Let $M$ be a $3$-manifold. Then, given any area-preserving diffeomorphism $\varphi:D\rightarrow D$ of the disk which is the identity (in a neighborhood of) the boundary, there exists a metric $g$ on $M$ such that $\varphi$ can be realized as the first return map of some Beltrami field $X$ on $(M,g)$, up to conjugation.
\end{corollary}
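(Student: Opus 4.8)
The plan is to obtain Corollary~\ref{prop:BeltDisk} directly by chaining Theorem~\ref{prop:Reebdisk} with the contact/Beltrami correspondence of Theorem~\ref{correspondence}, so that the proof reduces to assembling ingredients that are already available. First I would fix a co-oriented contact structure $\xi$ on $M$; such a structure exists by Martinet's theorem (every closed orientable $3$-manifold carries one, and $\mathbb S^3$ certainly does). With $(M,\xi)$ at hand, apply Theorem~\ref{prop:Reebdisk} to the given area-preserving diffeomorphism $\varphi\colon D\to D$ that is the identity near $\partial D$: this produces a defining contact form $\alpha$ with $\ker\alpha=\xi$ whose Reeb vector field $R$ admits an embedded disk $\Sigma\cong D$ transverse to $R$ that is a Poincar\'e section, with first return map conjugate to $\varphi$.

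Next I would invoke the ``converse'' (and easier) half of Theorem~\ref{correspondence}: any reparametrization of the Reeb field of a contact structure is a non-vanishing rotational Beltrami field for \emph{some} Riemannian metric $g$ and volume form $\mu$ on $M$. Applying this to the trivial reparametrization $X:=R$ yields a metric $g$ (together with an adapted volume form) on $M$ for which $X$ is Beltrami, i.e.\ $\operatorname{curl}X=fX$ with $f$ nowhere vanishing. This is precisely the metric whose existence is claimed in the corollary.

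Finally I would check that the dynamical conclusion transports. Since $X=R$ as vector fields, $X$ and $R$ have the same integral curves, so $\Sigma$ remains a Poincar\'e section for $X$ and its first return map is literally the same map, hence conjugate to $\varphi$. (More generally, had one used a nontrivial positive reparametrization $X=hR$ with $h>0$, as Theorem~\ref{correspondence} permits, the unparametrized orbits --- and thus the first return map to any cross section transverse to both fields --- would be unchanged, so the statement is robust.) I do not expect a genuine obstacle at this stage: all the substantive work is concentrated in Theorem~\ref{prop:Reebdisk}, and the only points needing a line of care here are the existence of a contact structure on $M$ to start the construction and the elementary remark that reparametrizing a flow does not alter its first return maps.
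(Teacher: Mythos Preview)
Your proposal is correct and follows exactly the argument the paper intends: choose a contact structure on $M$ via Martinet's theorem, apply Theorem~\ref{prop:Reebdisk} to realize $\varphi$ as the first return map of a Reeb field, and then invoke the contact/Beltrami correspondence of Theorem~\ref{correspondence} to obtain the metric $g$. The paper's justification is precisely this chain (stated in the sentence preceding the corollary together with the Remark on Martinet), so there is no meaningful difference in approach.
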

\begin{Remark}
To prove Corollary~\ref{prop:BeltDisk} we need to use the well known fact (since the works of Martinet) that any 3-manifold admits a contact structure. In higher dimensions only almost contact manifolds admit contact structures and thus the existence of contact structures on a given manifold is topologically obstructed~\cite{BEM}.
\end{Remark}

\begin{proof}[Proof of Theorem~\ref{prop:Reebdisk}]

\hfill \newline
We divide the proof in two steps. The first one realizes the diffeomorphism $\varphi$ as the first-return map of a Reeb vector field on a solid torus. This result is not new, but we provide an alternative (and simpler) argument to the proof presented by Bramham in~\cite[Chapter 4]{Bra}. In the second step we globalize the previous construction to obtain a Reeb field on any $3$-manifold.

\hfill \newline
\textbf{Step 1: Constructing a Reeb mapping torus.}

Let us denote by $\lambda$ the one form $r^2d\phi$, where $(r,\phi)$ are polar coordinates on the disk $D$. In particular, the form $dt+\lambda$ on $D\times [0,1]$ ($t$ is the coordinate on $[0,1]$) defines a contact form. Since the diffeomorphism $\varphi$ is area-preserving and the identity in a neighborhood of $\partial D$, it is isotopic to the identity and the time-1 flow of a Hamiltonian (non-autonomous) vector field. More precisely, there is a family of diffeomorphisms $\varphi_t$, which are the identity in a neighborhood of $\partial D$ for all $t\in[0,1]$, such that $\varphi_1=\varphi$, $\varphi_0=\operatorname{id}$, and this family is generated by a family of compactly supported vector fields $X_t$ so that
\begin{equation} \label{eq:Ham}
\iota_{X_t}d\lambda=dH_t\,,
\end{equation}
where $H_t$ is a family of functions (Hamiltonians) of the disk. For each $t\in[0,1]$, $H_t$ is obviously constant on a neighborhood of $\partial D$. Additionally, we can safely assume that $\varphi_t$ is the identity for $t<\delta$ and $t>1-\delta$, which implies that $H_t$ is constant on $D$ ($t$-dependent) for $t<\delta$ and $t>1-\delta$. Accordingly, redefining $H_t$ if necessary, we can assume that $H_t=0$ in a neighborhood of $\partial D$ for all $t\in[0,1]$, and $H_t=0$ on the whole $D$ for $t<\delta$ and $t>1-\delta$. Let us now define the function
\begin{align*}
\widetilde H: D\times[0,1] &\longrightarrow \mathbb{R} \\
		(a,z) &\longmapsto \tilde H(a,z):=H_z(a)\,,
\end{align*}
and the one form
$$\widetilde \alpha:=(\widetilde H+C)dz + \lambda$$
on the cylindrical set $D\times [0,1]$, where $C$ is a positive constant. We claim that for a large enough constant $C$, $\widetilde \alpha$ is a contact form. Indeed, a straightforward computation shows that

\begin{equation}\label{eqn:C}
 \widetilde \alpha \wedge d\widetilde \alpha= Cd\lambda\wedge dz+\widetilde Hd\lambda\wedge dz-d\widetilde H\wedge \lambda\wedge dz\,,
 \end{equation}

which is obviously positive if $C>C_0$, a constant that only depends on the $C^1$-norm of $\widetilde H$ on $D$. Additionally, the Reeb field of this contact form is a multiple of $\pp{}{z}+X$, where $X$ is defined as $X (a,z):=X_z(a)$. This is equivalent to checking the condition $\iota_{\pp{}{z}+X}d\widetilde \alpha=0$. Indeed, we have
\begin{align*}
\iota_{\pp{}{z}+X}d\widetilde \alpha&=\iota_{\pp{}{z}+X} (d\lambda + dH_z \wedge dz) \\
&= \iota_Xd\lambda-dH_z + (\iota_X dH_z)dz=0\,,
\end{align*}
where we have used Equation~\eqref{eq:Ham} to cancel out the first two terms in the second equality, and the third summand in the equality vanishes by contracting the same equation with $X$. In particular, the flow of the Reeb vector field of $\widetilde \alpha$ is a reparametrization of the flow of $\pp{}{z} + X$, whose time-one map is given by $\varphi$.

By the construction of the Hamiltonian family $H_t$, we conclude that the contact form $\widetilde \alpha$ is equal to $Cdz + \lambda$ on a neighborhood of the boundary of the set $D\times [0,1]$, and it descends to the quotient (the solid torus $D \times \mathbb S^1$, where the coordinate $z$ goes to a coordinate $\theta$ in $\mathbb S^1$). Still denoting the contact form in the quotient as $\widetilde \alpha$, it is obvious that near the boundary of the solid torus, $\widetilde \alpha$ is $Cd\theta+r^2d\phi$.



\hfill \newline

\textbf{Step 2: Global extension.}
%

Let $(M,\xi)$ be a contact $3$-manifold and take a circle $\gamma$ transverse to the contact structure (which always exists and can be chosen $C^0$ close to any given closed curve). It is standard that there are coordinates $(r',\phi,\theta)$ in a neighborhood $U=D_\rho \times S^1$ of the circle $\gamma=\{0\} \times S^1$ such that $\xi$ is defined by the kernel of the contact form
$$\beta_0=C(d\theta + r'^2d\phi)\,. $$
Here $D_\rho$ is a $2$-dimensional disk of small enough radius $\rho$, the coordinates are the standard angle $\theta$ on $\mathbb S^1$ and polar coordinates $(r',\phi)$ on $D_\rho$, and $C$ is the large constant introduced in Step~1. In particular, multiplying by a suitable positive factor if necessary, we can take a global contact form $\beta$ defining $\xi$ such that $\beta|_U=\beta_0$. Now we observe that the contact form $\widetilde \alpha$ obtained in Step~1 can be constructed on a disk $D_\rho$ of arbitrary radius using a $\rho$-rescaling of $D$ (a conjugation): $\Phi_\rho:D\times \mathbb S^1\to D_\rho\times\mathbb S^1$, with $(r',\phi,\theta):=\Phi(r,\phi,\theta)=(\rho r,\phi,\theta)$. Since $C>0$ is any large enough constant, it is clear that we can take the radius $\rho$ to be $$\rho=C^{-1/2}\,.$$
The (large) constant $C$ is fixed in what follows.

Using the conjugation $\Phi_\rho$ we define the contact form $\widetilde\alpha':=\Phi_{\rho*}\widetilde \alpha$ on $U$. Specifically,
\[
\widetilde\alpha'=(\widetilde H'+C)d\theta+Cr'^2d\phi\,,
\]
where $\widetilde H'\equiv \widetilde H'(r',\phi,\theta)=\widetilde H(C^{1/2}r',\phi,\theta)$. Obviously the associated Reeb field is a multiple of $\Phi_{\rho*}\Big(\pp{}{z}+X\Big)$; in particular, its first return map on the section $D_\rho\times \{0\}$ is given by $\Phi_\rho\circ\varphi\circ \Phi_\rho^{-1}$. Additionally, by construction,
\[
\widetilde\alpha'= C(d\theta+r'^2d\phi)
\]
in a neighborhood of the boundary of the solid torus $U$. $\widetilde\alpha'$ can then be extended to the complement $M\backslash U$ as $\beta$ because $\widetilde\alpha'=\beta_0$ on $\partial U$. In summary, denoting by $\alpha$ this globally defined contact form on $M$, we conclude that it coincides with $\widetilde \alpha'$ in $U$, and its associated Reeb field in this set has a first return map that is conjugated to $\varphi$.

It remains to show that the contact structure defined by $\ker \alpha$ is homotopic through contact structures to $\xi$. Indeed, let us define the family of one forms
\[
a_t:=((1-t)\widetilde H'+C)d\theta+Cr'^2d\phi
\]
in the toroidal set $U$. A straightforward computation shows that $a_t$ is a contact form for all $t\in[0,1]$. Moreover, $a_0=\alpha$ and $a_1=C(d\theta+r'^2d\phi)=\beta_0$. Noticing that $\alpha=\beta$ in $M\backslash U$, this yields a global homotopy of contact forms that interpolates $\alpha$ with $\beta$, which immediately implies an homotopy of contact structures $\xi_t$ such that $\xi_0=\ker \alpha$ and $\xi_1=\xi$. Applying Theorem~\ref{thm:Gray}, we deduce that $\ker \alpha$ is contactomorphic to $\xi$. The theorem then follows.

\end{proof}%
%

\begin{Remark}\label{Rem:fixcont}
An easy modification of the proof of Step~2 allows us to choose the defining contact form $\beta$ in the complement of the toroidal set $U$. More precisely, given any contact form $\beta$ defining the contact structure $\xi$, there is another defining contact form $\alpha$ such that $\alpha=\widetilde\alpha'$ on $U$ and $\alpha=\beta$ in the complement of a neighborhood of $U$.
\end{Remark}

A similar proof provides an equivalent statement for higher dimensional contact manifolds. It reads as follows:
\begin{theorem}
Let $(M,\xi)$ be a contact $(2n+1)$-manifold and $\varphi:D \rightarrow D$ a symplectomorphism of the $2n$-ball which is the identity (in a neighborhood of) the boundary. Then there exists a defining contact form $\alpha$ whose associated Reeb vector field $R$ exhibits a Poincar\'e section with first return map conjugated to $\varphi$.
\end{theorem}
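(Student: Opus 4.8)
The plan is to imitate the two-step argument of Theorem~\ref{prop:Reebdisk}, replacing the area-preserving disk diffeomorphism with a symplectomorphism of the $2n$-ball and keeping careful track of which computations are genuinely dimension-independent.

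First, for Step~1, I would work on $D\times[0,1]$ where $D$ is the $2n$-ball carrying its standard symplectic form $\omega_0=d\lambda$ with $\lambda$ the standard Liouville form vanishing near $\partial D$ (the precise primitive is irrelevant as long as it is the standard one near the boundary). Since $\varphi$ is a symplectomorphism of the ball that is the identity near $\partial D$, it is isotopic to the identity through compactly supported symplectomorphisms $\varphi_t$; the obstruction here is whether such an isotopy through \emph{Hamiltonian} diffeomorphisms exists, which holds because the compactly supported symplectomorphism group of a ball is connected and, the ball being simply connected, the flux homomorphism vanishes, so the isotopy is Hamiltonian. Thus $\varphi_t$ is generated by a time-dependent Hamiltonian $H_t$ via $\iota_{X_t}d\lambda=dH_t$, and as in the three-dimensional case we may arrange $H_t\equiv 0$ near $\partial D$ for all $t$ and $H_t\equiv 0$ on all of $D$ for $t$ near $0$ and near $1$. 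Then I set $\widetilde\alpha=(\widetilde H+C)dz+\lambda$ on $D\times[0,1]$ exactly as before; the key computation $\widetilde\alpha\wedge(d\widetilde\alpha)^n$ is now a wedge of $2n+1$ forms, and expanding $d\widetilde\alpha=d\lambda+d\widetilde H\wedge dz$ one checks that the top-dimensional form equals $C(d\lambda)^n\wedge dz$ plus lower-order-in-$C$ terms bounded by the $C^1$ norm of $\widetilde H$, hence is positive for $C$ large. The identity $\iota_{\partial_z+X}d\widetilde\alpha=0$ is verified verbatim, using $\iota_{X}d\lambda=dH_z$, so the Reeb flow is a reparametrization of the suspension flow of $\varphi$ and descends to the solid torus $D\times\mathbb S^1$, equal to $Cdz+\lambda$ near the boundary.

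For Step~2, I would take a closed curve $\gamma$ in $M^{2n+1}$ transverse to $\xi$ and use a Darboux-type neighborhood theorem for transverse curves (the analogue in dimension $2n+1$ of the three-dimensional normal form): a neighborhood $U=D_\rho\times\mathbb S^1$ of $\gamma$ in which $\xi=\ker\beta_0$ with $\beta_0=C(d\theta+\lambda)$ for a suitable standard $\lambda$ on $D_\rho$. After rescaling the ball $D$ to $D_\rho$ with $\rho=C^{-1/2}$, I transport $\widetilde\alpha$ to $U$, obtaining a contact form agreeing with $\beta_0$ near $\partial U$, and glue it to $\beta$ on $M\setminus U$. The convex interpolation $a_t=((1-t)\widetilde H'+C)d\theta+C\lambda'$ remains a contact form for all $t$ by the same $C$-large estimate, giving a homotopy of contact structures from $\ker\alpha$ to $\xi$; Gray stability (Theorem~\ref{thm:Gray}) then upgrades this to a contactomorphism, so $R$ has a Poincar\'e section with first return map conjugated to $\varphi$.

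The main obstacle is making precise the two facts I invoked as ``the same as in dimension three'': that a boundary-fixing symplectomorphism of the ball is generated by a (time-dependent, compactly supported) Hamiltonian, and that a transverse curve in a higher-dimensional contact manifold has a standardized tubular neighborhood of the form $D_\rho\times\mathbb S^1$ with contact form $C(d\theta+\lambda)$. The first is a connectivity/flux statement for $\mathrm{Symp}_c(D^{2n},\omega_0)$ and is standard; the second follows from the contact neighborhood theorem applied along $\gamma$ together with the fact that the normal symplectic bundle of a transverse curve is trivial (it admits a symplectic trivialization since $\gamma$ is homotopically a circle and $Sp(2n,\mathbb R)$ is connected). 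Everything else—the contact condition for $\widetilde\alpha$ and for $a_t$, and the Reeb field identity—is the three-dimensional computation with $d\lambda$ replaced by $(d\lambda)^n$ in the relevant wedge, and I expect no new difficulty there.
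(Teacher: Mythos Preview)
Your proposal follows exactly the adaptation the paper has in mind when it says ``a similar proof provides an equivalent statement for higher dimensional contact manifolds'': the two-step structure, the contact form $\widetilde\alpha=(\widetilde H+C)\,dz+\lambda$ on $D\times[0,1]$, the verification that $\widetilde\alpha\wedge(d\widetilde\alpha)^n>0$ for $C$ large, the Reeb identity $\iota_{\partial_z+X}d\widetilde\alpha=0$, the transverse-knot neighborhood model $C(d\theta+\lambda)$, the rescaling $\rho=C^{-1/2}$, and the convex homotopy $a_t$ together with Gray stability are precisely the higher-dimensional analogues of the proof of Theorem~\ref{prop:Reebdisk}. So as a reconstruction of the paper's intended argument you are on target.

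There is, however, one step you dismiss as ``standard'' that is not. You need every $\varphi\in\mathrm{Symp}_c(D^{2n},\omega_0)$ to be the time-one map of a compactly supported Hamiltonian isotopy, i.e.\ that $\mathrm{Symp}_c(D^{2n})$ is connected. This is classical for $n=1$ and is Gromov's theorem for $n=2$, but for $n\geq 3$ the connectedness of $\mathrm{Symp}_c(\mathbb{R}^{2n})$ is, to current knowledge, an open problem. The naive Alexander trick $\varphi_t(x)=t\,\varphi(x/t)$ does give a path of symplectomorphisms for $t\in(0,1]$, but it fails to be $C^1$ at $t=0$ (the spatial derivative $D_x\varphi_t$ at points with $x/t$ in the support of $\varphi$ does not converge to the identity), and no reparametrization in $t$ repairs this. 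The paper's one-line ``similar proof'' does not address the point either, so your write-up faithfully reflects the intended argument; but the honest fix is to restrict the hypothesis to $\varphi\in\mathrm{Ham}_c(D^{2n})$, which is all that is needed for the applications in the paper. (A minor slip: the radial Liouville form does not vanish near $\partial D$---it vanishes at the origin and is the standard one near the boundary, which is what you actually use.)
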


\section{Turing machines and symbolic dynamics}\label{S:Turing}

The key tool to construct a dynamical system that simulates a Turing machine is symbolic dynamics. Our goal in this section is to recall some basic properties of Turing machines and to introduce Moore's theory~\cite{Mo} on the connection between diffeomorphisms of manifolds and computation. In particular, we shall show that suitable generalizations of the shift map are enough to simulate universal Turing machines. This paves the way to construct a Turing complete area-preserving diffeomorphism of the disk, as we shall see in Section~\ref{S:diffeo}.

\begin{Remark}
As discussed in several parts of the literature~\cite{DKB, Del}, there are misleading intuitions that lead to conclude that the shift map can simulate a universal Turing machine. This happens when we accept to take initial points that are not constructible, i.e., they contain as initial information all the computations of the Turing machine instead of just its initial tape. See Section~\ref{SS.genshift} for a detailed explanation.
\end{Remark}

\subsection{Turing machines}\label{SS.Turing}

A Turing machine is defined via the following data:

\begin{itemize}
\item A finite set $Q$ of ``states'' including an initial state $q_0$ and a halting state $q_{halt}$.
\item A finite set $\Sigma$ which is the ``alphabet'' with cardinality at least two.
\item A transition function $\delta:(Q\times \Sigma) \longrightarrow (Q\times \Sigma \times \{-1,0,1\})$.
\end{itemize}

Let us denote by $q\in Q$ the current state, and by $t=(t_n)_{n\in \mathbb{Z}}\in \Sigma^\mathbb{Z}$ the current tape. For a given Turing machine $(Q,q_0,q_{halt},\Sigma,\delta)$ and an input tape $s=(s_n)_{n\in \mathbb{Z}}\in \Sigma^{\mathbb{Z}}$ the machine runs applying the following algorithm:

\begin{enumerate}
\item Set the current state $q$ as the initial state and the current tape $t$ as the input tape.
\item If the current state is $q_{halt}$ then halt the algorithm and return $t$ as output. Otherwise compute $\delta(q,t_0)=(q',t_0',\varepsilon)$, with $\varepsilon \in \{-1,0,1\}$.
\item Replace $q$ with $q'$ and $t_0$ with $t_0'$.
\item Replace $t$ by the $\varepsilon$-shifted tape, then return to step $(2)$. Following Moore~\cite{Mo}, our convention is that $\varepsilon=1$ (resp. $\varepsilon=-1$) corresponds to the left shift (resp. the right shift).
\end{enumerate}

In particular, the space of all possible internal states of a Turing machine is given by $\mathcal{P}:=Q\times \Sigma^\mathbb{Z}$. The transition function $\delta$ induces a global transition function $\Delta: Q\setminus \{q_{halt} \} \times \Sigma^{\mathbb{Z}} \rightarrow \mathcal{P}$, which sends an internal state in $\mathcal{P}$ to the internal state obtained after applying a step of the algorithm.

It is convenient to decompose the transition function $\delta$ in three components that we can denote by $F_1,F_2,F_3$:
\begin{align*}
F_1: Q\times \Sigma &\longrightarrow Q  \\
F_2: Q\times \Sigma &\longrightarrow \Sigma \\
F_3: Q\times \Sigma &\longrightarrow \{-1,0,1\}.
\end{align*}

The first component $F_1$ tells you the new state $q'$ in terms of the current state $q$ and the tape value $t_0$. The second component $F_2$ computes the new value of the tape cell $t_0'$ in terms of the state $q$ and the tape value $t_0$. Finally, the last component $F_3$ tells you if the tape should stay, shift to the left or shift to the right in terms of the current state and the tape value $t_0$.

As we shall see in Section~\ref{SS.genshift}, the reversible condition plays a crucial role to construct well behaved dynamical systems that simulate universal computation. Reversibility of a Turing machine can be defined in several equivalent ways. The definition in terms of the global transition function $\Delta$ is suitable for our discussion.

\begin{defi}\label{D:revers}
A Turing machine $T=(Q,q_0,q_{halt},\Sigma,\delta)$ is \emph{reversible} if the global transition function $\Delta$ is injective.
\end{defi}

\subsection{Generalized shifts and Turing machine simulation}\label{SS.genshift}

In this section we introduce Moore's theory of generalized shifts~\cite{Mo}, which will be instrumental to construct a Turing complete area-preserving diffeomorphism of the disk in Section~\ref{S:diffeo}.

First, an important remark on the necessity of this theory is in order: let us elaborate on the reason why the shift map is not suitable to perform universal computation. Indeed, consider a Turing machine with some given initial input. We can associate to it a sequence $(q_i)_{i\in\mathbb{N}}$ of states, where $q_i$ is the state of the machine at the step $i$. If the machine reaches the halting state at a step $j$, we define $q_i=q_{halt}$ for all $i\geq j$. Using this sequence, we can construct another sequence $(p_i)_{i\in \mathbb{Z}} \in \{0,1\}^{\mathbb{Z}}$ by setting $p_i=0$ if $i<0$, $p_i=0$ if $i\geq 0$ and $q_i\neq q_{halt}$ and $p_i=1$ if $q_i=q_{halt}$. Iterating this sequence by the standard (left) shift map, it is clear that the Turing machine halts if and only if the shift map finds a digit $1$ in position $0$ at some iteration. The problem is, however, that the initial sequence is not constructible; it is a priori an undecidable problem to construct the whole sequence $(q_i)_{i\in \mathbb N}$.

To introduce the notion of generalized shift, let $A$ be an alphabet and $S\in A^\mathbb{Z}$ an infinite sequence. A generalized shift is specified by two maps $F$ and $G$ which depend on $\emph{finitely many}$ positions of $S$. Denote by $D_F= \{i,...,i+r-1\}$ and $D_G=\{j,...,j+l-1\}$ the sets of positions on which $F$ and $G$ depend, respectively. They have cardinality $r\geq 1$ and $l\geq 1$, respectively. Obviously, these functions take a finite number of different values since they depend on a finite number of positions. The function $G$ modifies the sequence only at the positions indicated by $D_G$:
\begin{align*}
G:A^l &\longrightarrow A^l \\
(s_{j}...s_{j+l-1}) &\longmapsto (s_{j}'...s_{j+l-1}')
\end{align*}
Here $s_j...s_{j+l-1}$ are the symbols at the positions $j,...,j+l-1$ of an infinite sequence $S\in A^\mathbb{Z}$.

On the other hand, the function $F$ assigns to the finite subsequence $(s_{i},...,s_{i+r-1})$ of the infinite sequence $S\in A^\mathbb{Z}$ an integer:
$$ F:A^{r}\longrightarrow \mathbb{Z} $$

A generalized shift $\phi:A^\Z \rightarrow A^\Z$ is then defined as follows:
\begin{itemize}
\item Compute $F(S)$ and $G(S)$.
\item Modify $S$ changing the positions in $D_G$ by the function $G(S)$, obtaining a new sequence $S'$.
\item Shift $S'$ by $F(S)$ positions. That is, we obtain a new sequence $s''_n=s'_{n+F(S)}$ for all $n\in \Z$.
\end{itemize}
The sequence $S''$ is then $\phi(S)$. For example, the standard shift is obtained by taking $G$ to be the identity and $F\equiv 1$. For later convenience, when taking a sequence in $A^\mathbb{Z}$, we will write a point to denote that the symbol at the right of that point is the symbol at position $0$. In particular, the sequence $(s_n)$ can be denoted by $(...s_{-1}.s_0s_1...)$.

The remarkable property of generalized shifts is that they can simulate any Turing machine in the following sense:

\begin{defi}
We say that a generalized shift $\phi$ with alphabet $A$ is conjugated to a Turing machine $T=(Q,q_0,q_{halt},\Sigma,\delta)$ if there is an injective map
$$\varphi: \mathcal{P} \rightarrow A^\mathbb{Z}$$
such that the global transition function of the Turing machine is given by
\begin{equation} \label{eq:conju}
\Delta= \varphi^{-1}\phi\varphi\,.
\end{equation}
We recall that $\mathcal{P}=Q\times \Sigma^\mathbb{Z}$ is the space of all possible internal states of $T$.
\end{defi}

The following result, which was first proved in~\cite{Mo}, establishes that any Turing machine is conjugated to a generalized shift. Although this result is relatively standard, we include a proof because it helps to elucidate the connection between Turing machines and Generalized Shifts.


\begin{lemma}\label{lem:TuringGS}
Given a Turing machine $T=(Q,q_0,q_{halt},\Sigma,\delta)$, there is a generalized shift conjugated to it.
\end{lemma}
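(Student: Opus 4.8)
The plan is to construct the generalized shift $\phi$ explicitly by encoding the internal state $(q,t)\in\mathcal{P}$ of the Turing machine as a bi-infinite sequence over an enlarged alphabet, and then to check that a single step of the Turing machine algorithm is reproduced by a single application of $\phi$. First I would take the alphabet of the generalized shift to be $A:=\Sigma\cup(Q\times\Sigma)$ (so that a symbol records either a plain tape cell, or a tape cell together with the head/state information). The encoding map $\varphi:\mathcal{P}\to A^{\mathbb Z}$ sends an internal state $(q,(t_n)_{n\in\mathbb Z})$ to the sequence which agrees with $(t_n)$ at every position $n\neq 0$ and which carries the pair $(q,t_0)$ at position $0$; with the dot-notation introduced above, $\varphi(q,t)=(\dots t_{-1}.(q,t_0)t_1\dots)$. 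This map is manifestly injective, and the head position is always marked by the unique symbol lying in $Q\times\Sigma$, which sits at position $0$ by construction.

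Next I would define the local data $F$ and $G$ of the generalized shift. Both should depend only on position $0$ (so $D_F=D_G=\{0\}$, with $r=l=1$): reading the symbol at position $0$, if it is a plain letter of $\Sigma$ (which never happens on the image of $\varphi$, so we may define $\phi$ arbitrarily there, e.g.\ as the identity shift by $0$), do nothing; if it is a pair $(q,a)$ with $q\neq q_{halt}$, use the decomposition $\delta(q,a)=(F_1(q,a),F_2(q,a),F_3(q,a))=(q',a',\varepsilon)$ and set $G(q,a)$ to be the new symbol at position $0$ and $F(q,a):=\varepsilon=F_3(q,a)\in\{-1,0,1\}\subset\mathbb Z$; if $q=q_{halt}$, set $F:=0$ and $G:=\operatorname{id}$ so that $\phi$ fixes the sequence. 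The only subtlety is getting $G$ right: after the shift by $\varepsilon$, the head must again be marked at position $0$. If $\varepsilon=0$ the symbol at $0$ should become $(q',a')$. If $\varepsilon=1$ (left shift, in the paper's convention $s''_n=s'_{n+1}$) then the old position $1$ becomes the new position $0$, so the symbol that used to be at $0$ must be left as the \emph{plain} letter $a'$ (its state-marking is handed off to the neighbour), i.e.\ $G(q,a)=a'=F_2(q,a)$; symmetrically for $\varepsilon=-1$ we again set $G(q,a)=a'$. Thus in all non-halting cases $G(q,a)=F_2(q,a)$ and $F(q,a)=F_3(q,a)$, and one verifies directly from the definition of $\phi$ that $\phi(\varphi(q,t))=\varphi(\Delta(q,t))$, which is exactly \eqref{eq:conju}.

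The verification in the previous paragraph is the routine heart of the argument: one simply writes out $\varphi(q,t)$, applies the three bullet-point operations defining $\phi$ (compute $F$, $G$; modify position $0$ via $G$; shift by $F$), and compares the result symbol-by-symbol with $\varphi(\Delta(q,t))$, splitting into the three cases $\varepsilon\in\{-1,0,1\}$ and noting separately that when $q=q_{halt}$ both sides are the identity on $\varphi(q,t)$. I would also remark that $\phi$ is well-defined on \emph{all} of $A^{\mathbb Z}$, not just on the image of $\varphi$ — a point worth noting since the conjugacy only claims $\Delta=\varphi^{-1}\phi\varphi$ on $\mathcal P$, and $\varphi$ need not be surjective (sequences with no head-symbol, or with several, are simply not in the image).

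The main obstacle, such as it is, is purely bookkeeping rather than conceptual: one has to pin down the shift conventions so that the head-marker stays anchored at position $0$ after each step, which forces the slightly asymmetric-looking rule that $G$ always outputs the \emph{updated but unmarked} tape letter $F_2(q,a)$ while the marking $(q',\cdot)$ reappears at position $0$ automatically as a consequence of the shift by $F_3(q,a)$ picking up the appropriate neighbouring cell. Once the conventions are fixed, $D_F=D_G=\{0\}$ really do suffice because the transition function $\delta$ only ever reads the single cell under the head, and the lemma follows.
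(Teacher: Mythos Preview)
Your encoding idea is fine, but the choice $D_G=\{0\}$ is fatal: with that window the new state $q'$ is simply lost whenever $\varepsilon\neq 0$. Run your own recipe on $\varphi(q,t)=(\dots t_{-1}.(q,t_0)\,t_1\dots)$ with $\delta(q,t_0)=(q',t_0',1)$. Your $G$ replaces position $0$ by the \emph{plain} letter $t_0'$, giving $S'=(\dots t_{-1}.t_0'\,t_1\dots)$; then the left shift yields $S''=(\dots t_{-1}\,t_0'.\,t_1\,t_2\dots)$, which contains no symbol in $Q\times\Sigma$ at all. The claim that ``the marking $(q',\cdot)$ reappears at position $0$ automatically'' is false: the neighbouring cell at position $1$ was the plain letter $t_1$ both before and after $G$ acted, and nothing ever wrote $q'$ anywhere. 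The transition function $\delta$ indeed \emph{reads} only one cell, but a Turing step also \emph{moves the head}, which in your encoding means $G$ must deposit the head-marker on a neighbouring cell; that requires $D_G\supset\{-1,0,1\}$.

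The paper's proof avoids this trap by taking $A=\Sigma\cup Q$, encoding $(q,t)$ as $(\dots t_{-1}.q\,t_0\,t_1\dots)$ with the state inserted as an extra symbol, and setting $D_F=D_G=\{-1,0,1\}$; then $G$ permutes the three symbols $t_{-1},q,t_0$ (replacing $q,t_0$ by $q',t_0'$) according to $\varepsilon$, and the shift by $F=\varepsilon$ re-centers the state symbol. Your encoding can be salvaged the same way: keep $A=\Sigma\cup(Q\times\Sigma)$ and $\varphi$ as you wrote them, but take $D_G=\{-1,0,1\}$ and let $G(t_{-1},(q,t_0),t_1)$ equal $(t_{-1},t_0',(q',t_1))$, $((q',t_{-1}),t_0',t_1)$, or $(t_{-1},(q',t_0'),t_1)$ according as $\varepsilon=1,-1,0$. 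Either route gives the lemma.
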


\begin{proof}
Recall that $F_1,F_2,F_3$ denote the three components of the transition function $\delta$ of the Turing machine $T$. Let us construct a generalized shift whose alphabet is given by $A:=\Sigma \cup Q$, i.e., both the alphabet and the set of states of the Turing machine $T$. First, notice that to every internal state $(q,(t_i)_{i\in\Z})\in \mathcal{P}$ of $T$, we can assign the sequence $(\dots t_{-1}.qt_0t_1\dots)$ in $A^{\mathbb{Z}}$.

Now let us define the maps $F$ and $G$, which will depend only on the three positions $-1,0,1$, i.e., $D_F=D_G=\{-1,0,1\}$. For a sequence $(s_n)\in A^\mathbb{Z}$, denote by $a:=(s_{-1}s_0s_{1})$ the subsequence of symbols in positions $-1,0,1$. If this subsequence $a$ is not of the form $(t_{-1}qt_0)$ with $t_{-1},t_0 \in \Sigma$ and $q\in Q$, then we define $F(a):=0$ and $G(a):=a$. Otherwise, we have $a=t_{-1}qt_{0}$ for some symbols $t_{-1},t_{0}$ in $\Sigma$ and $q\in Q$. Setting $q':=F_1(q,t_0)$ and $t'_0:=F_2(q,t_0)$, we can define $F$ and $G$ as:

$$F(a):=F_3(q,t_0)\,,$$

and

\begin{equation}\label{Gturing}
G(a):=\begin{cases}
t_{-1}.t'_0q' \text{ if } F(a)=1\,,\\
q'.t_{-1}t'_0\text{ if } F(a)=-1\,,\\
t_{-1}.q't'_0 \text{ if } F(a)=0\,.
\end{cases}
\end{equation}

Although the function $F$ only depends on the positions $a_0$ and $a_1$ of $a$, we have chosen $D_F=D_G=\{-1,0,1\}$ so that both domains are the same. These maps $F$ and $G$ define a generalized shift $\phi$ as explained above.

Finally, given an internal state $(q,(t_i)_{i\in\Z}) \in \mathcal{P}$, it is straightforward to check that a step of the Turing machine $T$ corresponds to $\varphi^{-1}\phi \varphi$, where
\begin{align*}
\varphi: \mathcal{P} &\longrightarrow A^{\mathbb{Z}} \\
(q,(t_i)_{i\in\Z}) &\longmapsto s=\dots t_{-1}.qt_0t_1\dots
\end{align*}
is an injective map. By definition, this proves that $T$ is conjugated to $\phi$, and the lemma follows.
\end{proof}

The main property that we introduce in this section is that a reversible Turing machine is conjugated to a bijective generalized shift. This fact was stated in~\cite{Mo} without a proof. This is not immediately clear, since the global transition function of a Turing machine is not defined for the halting state $q_{halt}$, which prevents to extend the generalized shift when the symbol in position $0$ is $q_{halt}$. In order to fix this issue, and to obtain a bijective generalized shift via Lemma~\ref{lem:TuringGS}, we can extend the transition function by setting
\begin{equation}\label{eq:extens}
 \delta(q_{halt},t_0):=(q_0,t_0,0)\,,
\end{equation}
which ensures that $\delta$ is defined in all the domain of states $Q$. This also guarantees that $\Delta$ extends to all $\mathcal{P}$ in an injective way. When necessary, we shall assume that the global transition function has been extended this way without further mention. Moreover,  Equation~\eqref{eq:conju} is satisfied on the whole domain $\mathcal{P}$.

\begin{lemma}\label{lem:revTur}
A reversible Turing machine $T$, whose transition function has been extended as above, is conjugated to a bijective generalized shift.
\end{lemma}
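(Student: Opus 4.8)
The plan is to combine Lemma~\ref{lem:TuringGS} with the observation that reversibility of $T$ is precisely what is needed to make the generalized shift $\phi$ constructed there bijective. By Lemma~\ref{lem:TuringGS}, the extended machine $T$ is conjugated to a generalized shift $\phi$ with alphabet $A=\Sigma\cup Q$ and $D_F=D_G=\{-1,0,1\}$, via the injective map $\varphi$ sending $(q,(t_i))$ to $\dots t_{-1}.qt_0t_1\dots$. The key point is that the image $\varphi(\mathcal P)\subset A^{\mathbb Z}$ consists exactly of those sequences with a single symbol from $Q$, located at position $0$; call these the \emph{admissible} sequences. First I would check that $\phi$ maps admissible sequences to admissible sequences bijectively: this follows because $\Delta$ is a bijection of $\mathcal P$ (reversibility plus the halting-state extension \eqref{eq:extens}), and $\phi|_{\varphi(\mathcal P)}=\varphi\circ\Delta\circ\varphi^{-1}$ by \eqref{eq:conju}. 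So on the admissible part there is nothing to do beyond unwinding definitions.

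The real work is to arrange that $\phi$ is also bijective on the complement $A^{\mathbb Z}\setminus\varphi(\mathcal P)$, i.e.\ on all sequences whose window $(s_{-1}s_0s_1)$ is \emph{not} of the form $t_{-1}qt_0$ with $t_{-1},t_0\in\Sigma$, $q\in Q$. As defined in the proof of Lemma~\ref{lem:TuringGS}, on such sequences $F(a)=0$ and $G(a)=a$, so $\phi$ acts as the identity there --- which is certainly injective on that set and fixes it. Hence I would argue that $\phi$ is a bijection of $A^{\mathbb Z}$ by splitting into the two invariant pieces: $\phi$ permutes the admissible sequences bijectively (previous paragraph) and is the identity on the inadmissible ones, and these two sets are complementary and each $\phi$-invariant, so $\phi$ is globally bijective. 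The one subtlety to verify carefully is that $\phi$ does not send an inadmissible sequence to an admissible one or vice versa; for inadmissible sequences this is immediate since $\phi$ is the identity on them, and for admissible sequences it is exactly the content of $\phi(\varphi(\mathcal P))=\varphi(\mathcal P)$, which we already have. I would also double-check that the halting-extension \eqref{eq:extens} does not break injectivity of $\Delta$: when the window is $t_{-1}q_{halt}t_0$, the step does nothing except relabel $q_{halt}\mapsto q_0$, and since $q_0\neq q_{halt}$ one must confirm no non-halting internal state already maps to the relabeled configuration --- this is where the hypothesis that $T$ was reversible \emph{before} extension, together with $q_{halt}$ having no incoming transitions in a standard reversible machine, is used.

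The main obstacle I anticipate is precisely this last point: checking that extending $\delta$ by \eqref{eq:extens} preserves injectivity of the global transition function $\Delta$ on all of $\mathcal P$, rather than just on $\mathcal P\setminus(\{q_{halt}\}\times\Sigma^{\mathbb Z})$. This requires knowing that in a reversible Turing machine the halting state is not in the image of the transition function (no edge points \emph{into} $q_{halt}$), which is a standard normalization but should be stated. Granting that, the configurations $(q_{halt},t)$ form a $\Delta$-invariant set on which the extended $\Delta$ acts by $(q_{halt},t)\mapsto(q_0,t)$, which is injective and has image disjoint from $\Delta$ of the rest (again because $q_{halt}$ has no predecessors), so injectivity of the extended $\Delta$ follows. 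Everything else is a routine unwinding of the definition of generalized shift and of the conjugacy \eqref{eq:conju}, so the lemma follows.
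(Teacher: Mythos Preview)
Your approach follows the same broad outline as the paper's proof, but diverges at the final step. The paper does \emph{not} attempt to prove bijectivity of $\phi$ directly; instead it argues only that $\phi$ is \emph{injective} and then invokes \cite[Lemma~2]{Mo}, which states that any injective generalized shift is automatically bijective. Your direct route to bijectivity rests on two claims that are not justified.

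First, you assert that $\Delta$ is a bijection of $\mathcal P$, but Definition~\ref{D:revers} only gives injectivity of $\Delta$; surjectivity does not follow, so you cannot conclude that $\phi$ maps $\varphi(\mathcal P)$ \emph{onto} $\varphi(\mathcal P)$. Second, your identification of $A^{\mathbb Z}\setminus\varphi(\mathcal P)$ with the set of sequences whose window $(s_{-1}s_0s_1)$ fails to have the form $(t_{-1}qt_0)$ is incorrect. A sequence can have a window of the correct form yet still lie outside $\varphi(\mathcal P)$ --- for instance if some $s_j\in Q$ with $|j|\ge 2$ --- and on such sequences $\phi$ acts nontrivially, not as the identity. (The paper's one-line proof contains the same imprecision, but since it only needs injectivity before citing Moore's lemma, the gap is less consequential there.) Both issues are sidestepped simultaneously by the paper's appeal to \cite[Lemma~2]{Mo}; this is the ingredient you are missing.

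Finally, your concern about whether the extension~\eqref{eq:extens} preserves injectivity of $\Delta$ is well taken, but the resolution you propose is backwards: $q_{halt}$ certainly \emph{does} have incoming transitions --- that is how the machine halts. What one actually needs is that the new images $(q_0,t)$ produced by the extension do not already lie in the image of the unextended $\Delta$; the natural normalization guaranteeing this is that the \emph{initial} state $q_0$, not $q_{halt}$, has no incoming transitions.
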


\begin{proof}
By the previous discussion, the global transition function $\Delta$ of $T$ is injective and defined on the whole domain of states, so the conjugation specified in Equation~\eqref{eq:conju} also holds on the whole domain. Accordingly, the generalized shift map $\phi$ is injective when restricted to the subset $\varphi(\mathcal{P})\subset A^{\Z}$ because it is conjugated to the injective map $\Delta$; and it is also injective on its complement set, where it is the identity map. This shows that the generalized shift $\phi$ is injective, and in fact bijective by~\cite[Lemma~2]{Mo}, which completes the proof of the lemma.
\end{proof}

As shown by Moore, the relevance of generalized shifts comes from the fact that they are conjugated to maps of the square Cantor set, which allows one to use the machinery of symbolic dynamics (compare with the particular case of the standard shift map). We recall the following:

\begin{defi}
The square Cantor set is the product set $C^2:=C\times C \subset I^2$, where $C$ is the (standard) Cantor ternary set in the unit interval $I=[0,1]$. Additionally, we say that a Cantor block is a block of the form $B=[\frac{a}{3^i},\frac{a+1}{3^i}]\times [\frac{b}{3^j},\frac{b+1}{3^j}]$, where $i,j$ are nonnegative integers and $a<3^i$, $b<3^j$ are nonnegative integers such that there are points of $C^2$ in the interior of $B$.
\end{defi}

It is clear that for given $i,j$ we can find a finite amount of disjoint Cantor blocks whose union contains all the points of the square Cantor set. In what follows, we shall consider generalized shifts with alphabet $A=\{0,1\}$. Actually, as proved in~\cite[Lemma 1]{Mo}, this can always be assumed. Given an infinite sequence $s=(...s_{-1}.s_0s_1...)\in A^\Z$, we can associate to it an explicitly constructible point in the square Cantor set. The usual way to do this is to express the coordinates of the assigned point in base $3$: the coordinate $y$ corresponds to the expansion $(y_1,y_2,...)$ where $y_i=0$ if $s_i=0$ and $y_i=2$ if $s_i=1$. Analogously, the coordinate $x$ corresponds to the expansion $(x_{1},x_{2},...)$ in base $3$ where $x_i=0$ if $s_{-i}=0$ and $x_i=2$ if $s_{-i}=1$.

%

The aforementioned assignment between infinite sequences and points in the square Cantor set is key to prove a fundamental lemma that we borrow from Moore's work~\cite{Mo}. Combined with Lemma~\ref{lem:revTur} it will be key to construct a Turing complete area-preserving diffeomorphism of the disk in Section~\ref{S:diffeo}.

\begin{lemma}[Moore]\label{lem:Cantormap}
Any generalized shift is conjugated to the restriction to the square Cantor set of a piecewise linear map of $I^2$. This map consists of $k$ finitely many area-preserving linear components defined on Cantor blocks, with $k$ bounded as:
$$ k\leq n^{|D_F\cup D_G| + max|F|}\,.$$
Here $n:=|A|$. If the generalized shift is bijective, then the image blocks are pairwise disjoint.
\end{lemma}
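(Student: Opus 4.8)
The plan is to make explicit the correspondence between infinite sequences in $A^\Z$ and points of the square Cantor set $C^2$, and then check that the action of a generalized shift $\phi$ on sequences becomes, after this identification, a piecewise linear area-preserving map when restricted to $C^2$. Recall the assignment already described: a sequence $s = (\dots s_{-1}.s_0 s_1 \dots)$ is sent to the point $(x,y)$ whose base-$3$ expansions encode the positive-index symbols in $y$ and the negative-index symbols in $x$ (with the digit $0$ for symbol $0$ and $2$ for symbol $1$). This is a homeomorphism onto $C^2$. The key observation is that the value of $F(s)$ and $G(s)$ depends only on the symbols at the finitely many positions $D_F \cup D_G$; fixing those symbols pins down a Cantor block (a product of base-$3$ intervals of denominators $3^i, 3^j$ where $i$ bounds the largest positive index used and $j$ bounds the absolute value of the most negative index used). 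Each such block is where $\phi$ has a single ``branch'': on it, $G$ rewrites a fixed set of digits (a relabelling of a bounded prefix of the $x$- and $y$-expansions) and then the sequence is shifted by the fixed integer $m := F(s)$. Shifting a sequence by $m$ positions corresponds, in the $(x,y)$ coordinates, to moving $|m|$ base-$3$ digits between the $x$-expansion and the $y$-expansion — and that operation is a composition of: multiplying one coordinate by $3^{|m|}$, dividing the other by $3^{|m|}$, and adding fixed constants (to prepend the transplanted digits and to strip the removed ones). Each of these is affine with diagonal linear part $\mathrm{diag}(3^{\pm|m|}, 3^{\mp|m|})$, hence area-preserving; the digit-rewriting from $G$ is a pure translation on each block, also area-preserving. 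Composing, $\phi$ on each block is an affine map with determinant $1$.

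First I would set up the coordinate dictionary carefully and record the formula for the map on a generic block: given a block determined by prescribed symbols at positions $D_F \cup D_G$, compute $m = F$ on that block, apply $G$, and write down the resulting affine map of $I^2$ explicitly (linear part $\mathrm{diag}(3^{-m},3^{m})$ composed with a translation; if $m<0$ the exponents flip). Then I would count the blocks: a branch of $\phi$ is determined by the tuple of symbols at the positions in $D_F \cup D_G$, of which there are at most $n^{|D_F \cup D_G|}$ choices; but after $G$ rewrites and we shift by $m = F$, the domain block must also be refined enough that the image lands inside $I^2$ and is itself a union of Cantor blocks — shifting by up to $\max|F|$ forces us to further subdivide according to the symbols in the $\max|F|$ extra positions that get pulled across, giving the extra factor $n^{\max|F|}$. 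This yields $k \le n^{|D_F \cup D_G| + \max|F|}$. Finally, I would extend the piecewise-linear map from $C^2$ to all of $I^2$ by extending each affine branch (defined on its Cantor block) to the convex hull of that block, which is an honest rectangle in $I^2$; since the branches are affine and area-preserving the extension is automatic, and one checks the rectangles can be taken within $I^2$ after the subdivision above.

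For the bijective case: if $\phi$ is a bijective generalized shift, then distinct sequences map to distinct sequences, so the images of distinct domain blocks under distinct branches cannot overlap in $C^2$; because each branch is affine and maps Cantor blocks to Cantor blocks, the image Cantor blocks are pairwise disjoint, and hence so are the enclosing rectangles (after refining the subdivision once more if needed so that the closures are disjoint, which is possible since $C^2$ is totally disconnected). This is exactly the statement ``if the generalized shift is bijective, then the image blocks are pairwise disjoint.''

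The part I expect to require the most care is the bookkeeping in the block count — precisely matching the number of linear pieces to the exponent $|D_F \cup D_G| + \max|F|$ rather than something merely of the same order. One has to be honest about which positions' symbols are needed to (i) determine the branch (that's $D_F \cup D_G$) and (ii) determine the image block after a shift of size up to $\max|F|$ (that's the extra positions dragged across the origin), and to verify that no further refinement is forced, so that the bound is tight in the claimed form. The rest — area preservation of each affine branch, the homeomorphism onto $C^2$, and the disjointness argument in the bijective case — is conceptually routine once the coordinate dictionary is written down, though the explicit affine formulas should be stated so the reader can see the linear parts are genuinely $\mathrm{diag}(3^{\pm|m|},3^{\mp|m|})$.
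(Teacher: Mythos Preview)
Your proposal is correct and follows essentially the same route as the paper's own treatment: the paper does not give a full proof but only a sketch (deferring to Moore), and that sketch is precisely your decomposition into a translation coming from $G$ followed by the shift coming from $F$, realized blockwise as an area-preserving affine map. The only cosmetic difference is that the paper phrases the shift piece as a restriction of a power of the horseshoe map (hence mentions a rotation), whereas you write the linear part directly as $\mathrm{diag}(3^{\pm m},3^{\mp m})$; on each Cantor block these agree, so there is no substantive divergence.
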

\begin{Remark}
In the literature, there have been other attempts to simulate a reversible Turing machine by means of selecting the space of states of the machine as a constructible choice of coordinates in the square $I^2$ and extending the global transition function from that set of points to a bijective map of $I^2$. However, these other models do not provide a continuous~\cite{RTY} or a compactly supported extension~\cite{portug} or they increase the dimension~\cite{T1}. In Section~\ref{S:diffeo} we will show that Moore's approach has the advantage that it can be used to promote the map constructed in Lemma~\ref{lem:Cantormap} to a smooth (area-preserving) diffeomorphism of the disk.
\end{Remark}

Let us briefly explain the main ideas of the construction of the map in Lemma~\ref{lem:Cantormap}. If we fix our attention on a single Cantor block, the piecewise linear map is constructed as the composition of two linear maps. The first one is a translation (depending on the function $G$ of the generalized shift), which sends a block onto another one. Next, using the function $F$, we get an integer which tells us how many shifts have to be applied to the block. The action of the shift map on a block can be obtained by restriction of a positive or negative power of the horseshoe map. This second well known map, is a composition of a translation, a rotation and a rescaling in each coordinate. We finish with the following example, which illustrates Lemma~\ref{lem:Cantormap}.

\begin{Example}
A simple example of a generalized shift and its associated piecewise linear map can be constructed as follows. Consider a generalized shift with alphabet $\{0,1\}$, and such that $D_F=D_G=\{-1,0\}$. We define the functions $F$ and $G$ as: $G(0.1)=0.1$, $G(1.1)=0.0$, $G(0.0)=0.1$, $G(1.0)=1.1$ and $F(0.1)=F(0.0)=-1$, $F(1.1)=F(1.0)=0$. By assigning letters to the Cantor blocks corresponding to each possible finite string of two elements, the associated map can be represented by blocks. Denote by $A,B,C$ and $D$ the Cantor blocks whose corresponding sequences have in positions $-1,0$ respectively the pairs $(0.1),(1.1),(0.0)$ and $(1.0)$; the position of these blocks in the square is computed following the assignment that we introduced before, thus obtaining Figure~\ref{fig:mapsquare} (in the same figure we also represent the images of the blocks). The piecewise linear map can be explicitly written as:

\begin{equation*}
(x,y) \longmapsto \begin{cases} (3x, y/3) \text{ if } (x,y)\in A\\
(3(x-2/3),1/3(y-2/3))\text{ if } (x,y)\in B\\
(x,y+2/3) \text{ if } (x,y)\in C\cup D
\end{cases}
\end{equation*}
\begin{figure}[!ht]

\begin{center}
\begin{tikzpicture}
     \node[anchor=south west,inner sep=0] at (0,0) {\includegraphics[scale=0.15]{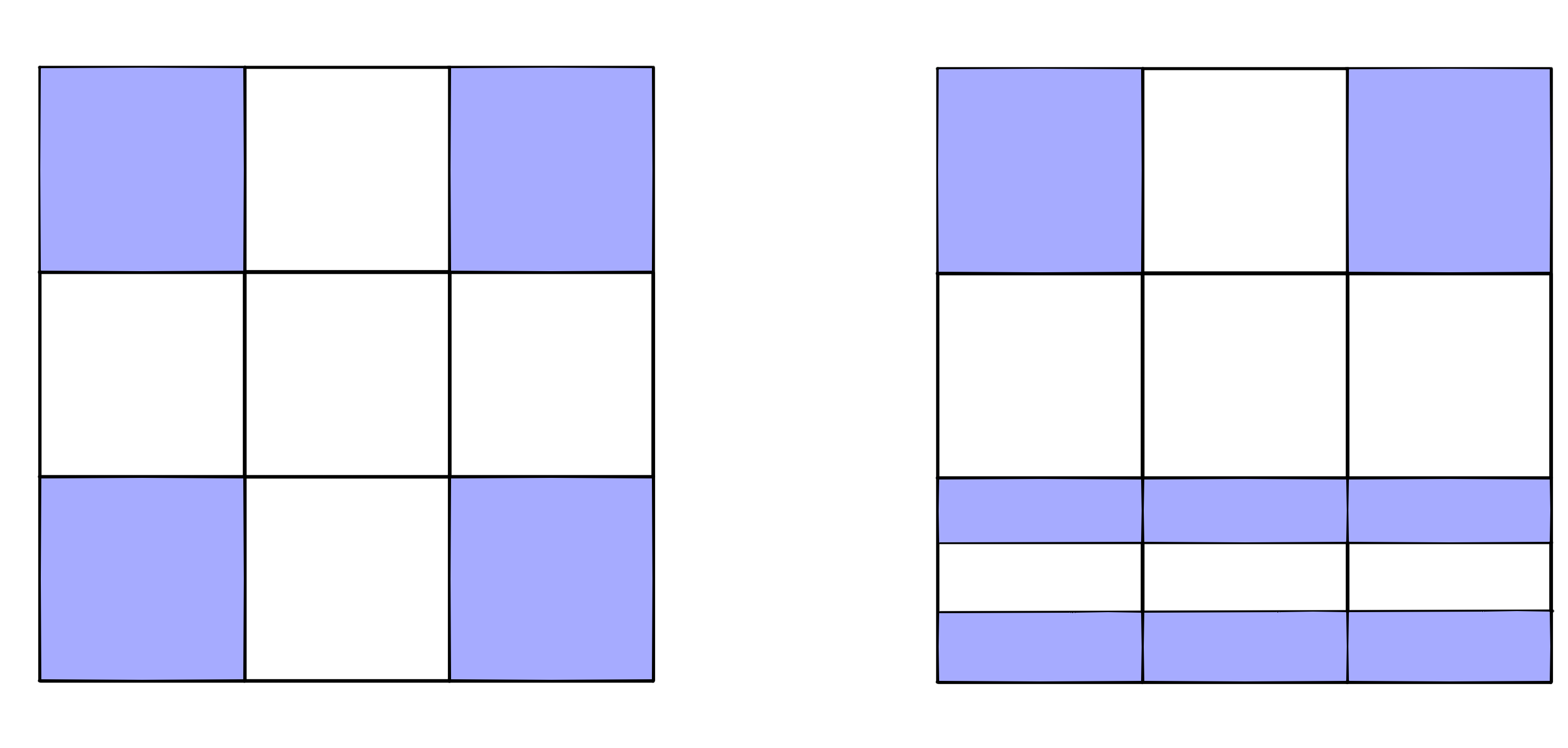}};

\node[scale=1.5] at (1.05,4.3) {$A$};
\node[scale=1.5] at (1.05,1.24) {$C$};
\node[scale=1.5] at (4.05,4.3) {$B$};
\node[scale=1.5] at (4.05,1.24) {$D$};

\node[scale=1.5] at (5.9,2.8) {$\longrightarrow$};

\node[scale=1.5] at (7.75,4.3) {$C'$};
\node[scale=1.5] at (10.75,4.3) {$D'$};
\node[scale=1.2] at (9.25,1.75) {$A'$};
\node[scale=1.2] at (9.25,0.76) {$B'$};
\end{tikzpicture}
\caption{Blocks map in the unit square}
\label{fig:mapsquare}
\end{center}
\end{figure}

\end{Example}

\section{An area-preserving diffeomorphism of the disk that is Turing complete}\label{S:diffeo}

The goal of this section is to construct an area-preserving diffeomorphism of the disk that simulates a universal Turing machine. The main tool is the generalized shifts introduced in Section~\ref{SS.genshift} and their connection with piecewise linear maps of Cantor blocks. In this direction, a first simple observation is that if we choose a set of disjoint blocks containing all the Cantor set, they lie in the unit square with some gaps in between that do not contain points of the square Cantor set. We will use these gaps to extend the piecewise linear map constructed in Lemma~\ref{lem:Cantormap} to an area-preserving diffeomorphism of the disk, provided that the generalized shift is bijective.

\subsection{Smoothing the map}
For any generalized shift, Lemma~\ref{lem:Cantormap} establishes the existence of a piecewise linear map defined on finitely many Cantor blocks whose action on the square Cantor set is conjugated to the generalized shift. In~\cite[Theorem 12]{Mo}, Moore sketches an argument to extend this map to a diffeomorphism of the disk. In the following proposition, using standard arguments, we show that this map can be done area-preserving as long as the generalized shift is bijective.

\begin{prop}\label{prop:GSarea}
For each bijective generalized shift and its associated map of the square Cantor set $\phi$, there exists an area-preserving diffeomorphism of the disk $\varphi:D\rightarrow D$ which is the identity in a neighborhood of $\partial D$ and whose restriction to the square Cantor set is conjugated to $\phi$.
\end{prop}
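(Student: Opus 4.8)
The plan is to take the piecewise linear map furnished by Lemma~\ref{lem:Cantormap} — a finite collection of area-preserving affine maps, each sending one Cantor block $B_i$ onto an image block $B_i'$, with the $B_i$ disjoint and covering $C^2$, and (since the generalized shift is bijective) the $B_i'$ also pairwise disjoint — and interpolate it across the complementary gaps to a genuine diffeomorphism of $D$, keeping the area form intact. First I would place everything inside a slightly larger square $[-\epsilon, 1+\epsilon]^2$ compactly embedded in $D$, so that it suffices to build an area-preserving diffeomorphism of $\R^2$ supported in this square. The first step is a normalization: by conjugating with a fixed area-preserving diffeomorphism of the square, I may assume each source block $B_i$ and each target block $B_i'$ is a genuine little square (rather than a thin rectangle), and that on each $B_i$ the affine map $\phi|_{B_i}$ is a translation composed with a rotation by a multiple of $\pi/2$ — i.e. an orientation-preserving Euclidean isometry between small squares. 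This is possible because each linear component is area-preserving, hence by polar decomposition a rotation times a positive-determinant-$1$ symmetric matrix, and the latter can be absorbed by a fixed global area-preserving change of coordinates applied uniformly (the shift/horseshoe description in the text already presents the components in this translation+rotation+rescaling form, so the rescaling is the only thing to absorb).

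The core step is then the following interpolation lemma, which I would state and prove separately: given two finite families of pairwise disjoint closed squares $\{B_i\}$, $\{B_i'\}$ in the open square $(0,1)^2$, together with orientation-preserving Euclidean isometries $g_i: B_i \to B_i'$, there is an area-preserving diffeomorphism $\Psi$ of $\R^2$, compactly supported in $(0,1)^2$ (hence extendable by the identity to a diffeomorphism of $D$ that is the identity near $\partial D$), with $\Psi|_{B_i} = g_i$ for every $i$. To prove this, I would first reduce to a Hamiltonian isotopy problem: both families of squares can be shrunk to a third family of even smaller squares $B_i''$ sitting well inside the intersections of nothing in particular — more precisely, one moves each $B_i$ to a standard small square by an ambient area-preserving isotopy (disjoint squares in a disk are unknotted and can be moved to prescribed disjoint positions by a compactly supported volume-preserving isotopy, since the space of such configurations is connected), does the same for the $B_i'$, and matches the isometries $g_i$ on the standard squares by composing with rotations, which again extend to compactly supported area-preserving diffeomorphisms (a rotation by $k\pi/2$ on a small square extends to a compactly supported Hamiltonian diffeomorphism of a slightly larger disk). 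Concatenating these three area-preserving diffeomorphisms gives $\Psi$. The key enabling facts are the connectedness of configuration spaces of disjoint disks in a surface and Moser's trick / the fact that a time-dependent divergence-free compactly supported vector field integrates to a compactly supported area-preserving isotopy — both standard.

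Finally I would assemble: let $\varphi$ be the diffeomorphism of $D$ obtained by extending $\Psi$ (post-composed with the normalization conjugacies from Step 1, which are themselves area-preserving and compactly supported) by the identity outside the square. By construction $\varphi$ is area-preserving, is the identity in a neighborhood of $\partial D$, and restricted to $\bigcup_i B_i \supset C^2$ it agrees with the piecewise linear map $\phi$ up to the fixed conjugacy; hence $\varphi|_{C^2}$ is conjugated to $\phi$. I expect the main obstacle to be the bookkeeping in the interpolation lemma — specifically, verifying that one can simultaneously move a finite disjoint family of squares to prescribed disjoint positions by a \emph{single} compactly supported area-preserving isotopy while controlling the induced maps on the squares (so that the rotational part $g_i$ can be realized and not just the positions). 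This is where bijectivity of the generalized shift is essential: without the image blocks being disjoint, no such embedding-to-embedding interpolation exists, and the argument genuinely breaks down. Everything else is soft and standard once this lemma is in hand.
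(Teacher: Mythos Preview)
Your overall strategy is sound and can be completed to a correct proof, but it takes a different route from the paper, and there is one genuine wrinkle in your normalization step.

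The paper does not try to stay in the area-preserving category throughout. Instead it (i) builds an isotopy of embeddings $F_t:\bigcup B_i\to D$ from the identity to the piecewise linear map $F$ by first contracting each block by a homothety, then translating the shrunken images into the target blocks, then re-expanding---none of these intermediate maps preserves area; (ii) extends $F_t$ to a global isotopy $\varphi_t$ of $D$ (identity near $\partial D$) via the homotopy extension property, so that $\varphi_1|_{\bigcup B_i}=F$; and (iii) applies Moser's path method once at the end: since $\varphi_1^*\omega_{std}=\omega_{std}$ on $\bigcup B_i$ and the two forms have equal total area, Moser yields $G_1$ with $G_1|_{\bigcup B_i}=\operatorname{id}$ and $(\varphi_1\circ G_1)^*\omega_{std}=\omega_{std}$. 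All the symplectic bookkeeping is concentrated in that single Moser correction, which is why the paper's argument is short and never needs to analyze the shape of the linear pieces.

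Your approach---normalize the affine pieces to Euclidean isometries, then interpolate entirely by compactly supported Hamiltonian isotopies---is more hands-on in the symplectic category. The wrinkle is the normalization: you claim a \emph{single} conjugating diffeomorphism simultaneously turns every source block and every target block into a square and every $\phi|_{B_i}$ into a rigid motion. But the source family $\{B_i\}$ and the target family $\{B_j'\}$ need not be mutually disjoint (in the paper's own Example the thin image rectangles $A',B'$ overlap the source blocks $C,D$, and in fact $C'=A$, $D'=B$), so the local straightening required near some $B_i$ and near an overlapping $B_j'$ can conflict; moreover the polar-decomposition remark does not help, since different blocks carry \emph{different} powers of the horseshoe scaling rather than a common symmetric part that one global linear conjugacy could absorb. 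This is fixable---either abandon the single-conjugacy requirement and normalize source and target configurations by two separate compactly supported area-preserving maps, or skip the normalization entirely and prove your interpolation lemma directly for area-preserving affine maps (each such map is the time-one flow of a compactly supported Hamiltonian on a slightly larger disk)---but as written the step is not justified. The paper's contract--translate--expand--then--Moser route sidesteps this issue completely, precisely because it never asks the linear pieces to be rigid.
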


\begin{proof}

For simplicity we use the same notation $\phi$ for the generalized shift and its associated map of the square Cantor set obtained via Lemma~\ref{lem:Cantormap}. This map is defined on a finite disjoint union of Cantor blocks (that contain the whole square Cantor set), and the images of these blocks are pairwise disjoint because the generalized shift is bijective. Taking an open neighborhood $D$ (diffeomorphic to a disk) of the square $I^2$, our goal is to extend $\phi$ to the whole $D$.

To this end, we start by choosing a set contained in $D$ of disjoint (small enough) open neighborhoods $B_i$ of each Cantor block. Since the map $\phi$, which is piecewise linear, is obviously defined on each neighborhood, it maps each $B_i$ onto a neighborhood $V_i$ of the images of the Cantor blocks. Obviously, $V_i$ are pairwise disjoint and have the same area as $B_i$.

\begin{figure}[!ht]

\begin{center}
\begin{tikzpicture}
     \node[anchor=south west,inner sep=0] at (0,0) {\includegraphics[scale=0.15]{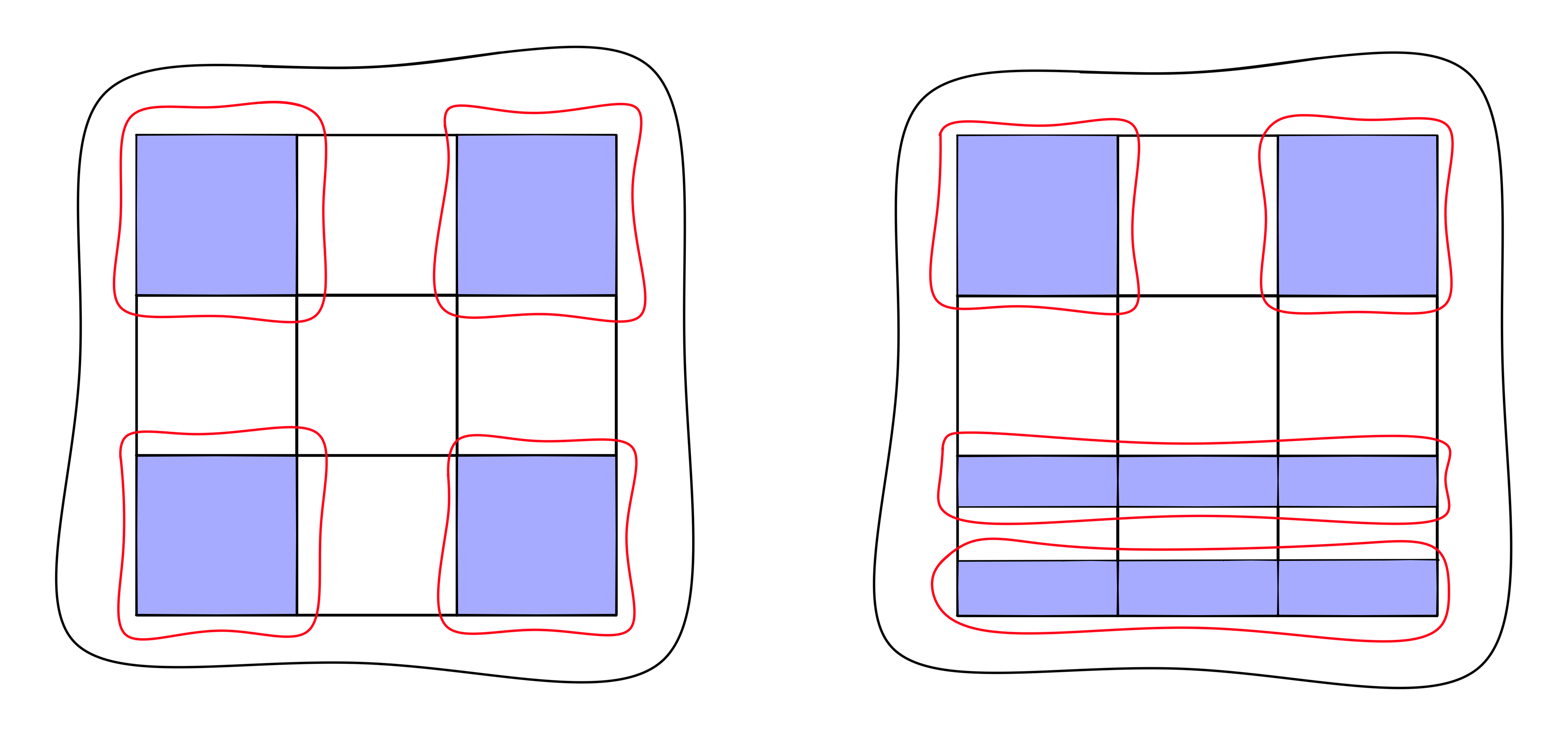}};

\node[scale=1.5] at (2,4.87) {$A$};
\node[scale=1.5] at (2,1.85) {$C$};
\node[scale=1.5] at (5.05,4.87) {$B$};
\node[scale=1.5] at (5.05,1.85) {$D$};

\node[scale=1.5] at (7.5,3.2) {$\longrightarrow$};

\node[scale=1.5] at (9.85,4.87) {$C'$};
\node[scale=1.5] at (12.88,4.87) {$D'$};
\node[scale=1.2] at (11.3,2.35) {$A'$};
\node[scale=1.2] at (11.3,1.35) {$B'$};
\end{tikzpicture}
\caption{Blocks map by open balls}
\label{fig:mapballs}
\end{center}
\end{figure}

This immediately yields a diffeomorphism $F:\bigcup B_i \rightarrow  \bigcup V_i$ that preserves the standard area form $\omega_{std}=dx\wedge dy$. We claim that this map extends to a diffeomorphism of the disk that is isotopic to the identity. To prove this, we construct a family of maps $F_t:\bigcup B_i \rightarrow D$ such that $F_1=F$, $F_0=\operatorname{id}$, and $F_t$ is a diffeomorphism into its image for each $t\in [0,1]$. To construct this family we first define $F_t^{(1)}$ for $t\in [0,1/3]$ to be an homothety in each open ball $B_i$, which contracts each ball to a ball of small enough area $\delta$. Specifically, taking a point $p_i\in B_i$ such that $B_i$ is star shaped with respect to $p_i$ (this is possible because $B_i$ is a neighborhood of a Cantor block), for each $x\in B_i$

$$F_t^{(1)}(x):=p_i+\lambda_t (x-p_i)\,,$$

where $\lambda_t$ is a smooth function on $B_i$ such that $\lambda_0=1$ and $\lambda_{1/3}<\delta$. Next we choose different points $q_i$ inside the image balls $V_i$ and construct $F_t^{(2)}:\bigcup F_{1/3}^{(1)}(B_i)\rightarrow D$ for $t\in [1/3,2/3]$ to be a map such that $F_{1/3}^{(2)}=\operatorname{id}$ and $F_{2/3}^{(2)}$ sends each ball $F_{1/3}^{(1)}(B_i)$ inside a $\delta$-neighborhood of $q_i$ (contained in $V_i$). Finally, we define an expansion $F_t^{(3)}:\bigcup F_{2/3}^{(2)}(F_{1/3}^{(1)}(B_i))\rightarrow D$ for $t\in [2/3,1]$, analogous to $F_t^{(1)}$, so that $F_{2/3}^{(3)}=\operatorname{id}$ and $F_{1}^{(3)}(F_{2/3}^{(2)}(F_{1/3}^{(1)}(B_i)))=V_i$. The map $F_t$ is then obtained as:

\begin{equation*}
F_t:=\begin{cases}
F_t^{(1)} \text{ for } t\in[0,1/3]\,,\\
F_t^{(2)}\circ F_{1/3}^{(1)} \text{ for } t\in [1/3,2/3]\,,\\
F_t^{(3)}\circ F_{2/3}^{(2)}\circ F_{1/3}^{(1)} \text{ for } t\in [2/3,1]\,.
\end{cases}
\end{equation*}

Now, if we set $F_t$ to be the identity in a neighborhood of the boundary of $D$, the homotopy extension property allows us to extend $F_t$ to a family of diffeomorphisms $\varphi_t$ of the disk such that $\varphi_1|_{\bigcup B_i}=F$, $\varphi_0=\operatorname{id}$ and $\varphi_t$ is the identity near the boundary of $D$ for all $t$.

The standard area form $\omega_{std}$ is sent to another area form that we denote by $\omega_1:= \varphi_1^*\omega_{std}$. Notice that $\omega_1=\omega_{std}$ on $\bigcup B_i$ because $F$ is area-preserving. Additionally, we can interpolate linearly between these two area forms:
$$ \omega_t:=t \omega_1 + (1-t) \omega_{std}\,.$$
Of course, $\omega_t$ is nondegenerate for all $t\in[0,1]$. Noticing that both forms have the same area since $\int_D \omega_{std}=\int_{\varphi^*D} \omega_{std}=\int_D \varphi^*\omega_{std}=\int_D \omega_1$, it follows that $\omega_1-\omega_{std}$ is an exact $2$-form. Applying Moser's path method we then obtain a family of diffeomorphisms $G_t: D\rightarrow D$, $G_0=\operatorname{id}$, such that $G_t^*\omega_t=\omega_{std}$ for all $t\in[0,1]$. Moreover, we can assume that $G_t|_{\bigsqcup B_i}=id$ because $\omega_t=\omega_{std}$ for all $t\in[0,1]$. Finally, the diffeomorphism $\varphi:=\varphi_1 \circ G_1$ satisfies the required conditions, i.e., $\varphi|_{\bigcup B_i}=F$ and $\varphi^*\omega_{std}=\omega_{std}$. The proposition follows noticing that $D$ can be identified with the unit disk in $\mathbb R^2$, after applying a suitable diffeomorphism.
\end{proof}

\subsection{A Turing complete area-preserving diffeomorphism of the disk} \hfill

We are now ready to establish the existence of a Turing complete area-preserving diffeomorphism of the disk that is the identity on the boundary. We remark that our notion of Turing completeness is slightly different from the one used in~\cite{T1, CMPP}, see Remark~\ref{R:Tao} below, but it has the same computational power. Key to the proof are Proposition~\ref{prop:GSarea} and the constructibility of the unique point in the square Cantor associated to an infinite sequence in $\{0,1\}^\Z$, cf. Section~\ref{SS.genshift}. In the proof we also make use of an instrumental result (Lemma~\ref{lem:TuringReads}) allowing us to show that our area-preserving diffeomorphism can check a finite substring of the output of a Turing machine.



\begin{theorem}\label{thm:areaTuring}
There exists a Turing complete area-preserving diffeomorphism $\varphi$ of the disk that is the identity in a neighborhood of the boundary. Specifically, for any integer $k\geq 0$, given a Turing machine $T=(Q,q_0,q_{halt},\Sigma,\delta)$, an input tape $(t_n) \in \Sigma^{\mathbb{Z}}$ and a finite string $(t_{-k}^*,...,t_k^*)\in \Sigma^{2k+1}$, there is an explicitly constructible point $p\in D$ and an explicitly constructible open set $U\subset D$ such that the orbit of $\varphi$ through $p$ intersects $U$ if and only if $T$ halts with an output tape whose positions $-k,...,k$ correspond to $t_{-k}^*,...,t_k^*$.
\end{theorem}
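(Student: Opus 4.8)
The plan is to combine all the machinery assembled so far in a single pipeline: given a Turing machine $T$, first make it reversible without loss of generality, then pass to a bijective generalized shift via Lemma~\ref{lem:revTur}, then to a bijective piecewise linear map of the square Cantor set via Lemma~\ref{lem:Cantormap}, and finally to a smooth area-preserving diffeomorphism $\varphi$ of the disk via Proposition~\ref{prop:GSarea}. The point $p$ and the open set $U$ will be built from the symbolic encoding: $p$ is the explicitly constructible point in $C^2\subset I^2\subset D$ associated (as in Section~\ref{SS.genshift}, base-$3$ expansion) to the sequence $\varphi_{enc}(q_0,(t_n))\in\{0,1\}^{\mathbb Z}$ encoding the initial internal state, and $U$ is a finite union of (open neighborhoods of) Cantor blocks, namely those whose defining symbols match "state $=q_{halt}$ at position $0$ and tape symbols $t^*_{-k},\dots,t^*_k$ at the appropriate positions"; the constructibility of $U$ is immediate because there are only finitely many such blocks and each is given by a rational dyadic/triadic description. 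Since $\varphi$ restricted to $C^2$ is conjugated (through the explicit coding map) to the generalized shift, which is conjugated to $\Delta$, the orbit of $p$ under $\varphi$ corresponds step by step to the run of $T$ on input $(t_n)$.

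The key technical point, which I would isolate as a preliminary lemma (the "Lemma~\ref{lem:TuringReads}" alluded to before the statement), is the translation between "$T$ halts with the prescribed output substring" and "the $\varphi$-orbit of $p$ meets $U$". First, because the transition function has been extended by $\delta(q_{halt},t_0):=(q_0,t_0,0)$ (Equation~\eqref{eq:extens}), once the Turing machine reaches $q_{halt}$ it stays there with a frozen tape; hence the internal state "just after halting" has $q_{halt}$ in position $0$ forever, and its tape is exactly the output tape. So the orbit of $p$ reaches a point of $C^2$ lying in one of the finitely many Cantor blocks carrying the symbol $q_{halt}$ in position $0$ \emph{and} the symbols $t^*_{-k},\dots,t^*_k$ in positions $-k,\dots,k$ if and only if $T$ halts with the prescribed output substring. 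Conversely, if $T$ never halts, the symbol in position $0$ is always a non-halting state, so the orbit never enters any of these blocks. One has to check the "iff" at the level of the \emph{open set} $U$ rather than $C^2$: choosing the neighborhoods $B_i$ of the Cantor blocks in Proposition~\ref{prop:GSarea} small enough (in particular disjoint from all other blocks), the only orbit points of $p$ that can land in $U$ are the genuine Cantor-set points of the symbolic orbit, so no spurious intersections occur. This "small neighborhoods separate the blocks" observation is exactly what makes the open-set formulation equivalent to the symbolic one, and it is where the bijectivity of the generalized shift (pairwise disjoint image blocks) is used.

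For the reduction to a reversible machine I would invoke the standard fact (Bennett) that any Turing machine can be simulated by a reversible one, so "Turing complete" in the sense of simulating every $T$ is not lost; alternatively one restricts attention to universal reversible Turing machines, which exist, and notes that simulating a single universal reversible machine already yields Turing completeness in the stated sense. I would then state Remark~\ref{R:Tao} comparing this with the notion of Turing completeness in~\cite{T1,CMPP}: there one asks the halting of $T$ to be encoded by whether the orbit hits a fixed open set, here we additionally read off a finite window of the output tape, which is strictly more information but is achieved with the same construction by merely enlarging the target set $U$ to a finite union of blocks; the computational power (e.g. undecidability of the halting/orbit problem) is unchanged. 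The main obstacle is not any single hard estimate but rather bookkeeping: making sure the chain of conjugacies (internal states $\leftrightarrow$ sequences in $A^{\mathbb Z}$ $\leftrightarrow$ sequences in $\{0,1\}^{\mathbb Z}$ $\leftrightarrow$ points of $C^2$ $\leftrightarrow$ orbit of $\varphi$) is tracked explicitly enough that $p$ and $U$ come out genuinely constructible, and that the frozen-tape behaviour after halting is handled so that "the orbit eventually enters $U$" really is equivalent to "$T$ halts with that output window" and not merely "$T$ halts".
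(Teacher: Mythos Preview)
Your overall pipeline (universal reversible Turing machine $\to$ bijective generalized shift $\to$ piecewise linear Cantor map $\to$ area-preserving diffeomorphism via Proposition~\ref{prop:GSarea}) matches the paper's, but there are two genuine gaps in how you handle the output check.

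First, the extension $\delta(q_{halt},t_0):=(q_0,t_0,0)$ does \emph{not} freeze the machine at $q_{halt}$: it sends the state back to $q_0$ (with unchanged tape and no shift). So after reaching $q_{halt}$ the machine restarts, and your claim that ``the internal state just after halting has $q_{halt}$ in position $0$ forever'' is false. The orbit passes through a $q_{halt}$-block exactly once and then leaves it.

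Second, and more seriously, the theorem asks for a \emph{single} diffeomorphism $\varphi$, so $\varphi$ must be built from a fixed universal reversible machine $T_{un}$ (your ``alternative'' is the only viable option; your primary plan would produce a different $\varphi$ for each $T$). But then your proposed $U$ --- blocks carrying $q_{halt}$ in position $0$ \emph{and} the symbols $t^*_{-k},\dots,t^*_k$ in the neighbouring positions --- is inspecting the tape of $T_{un}$, not the tape of the simulated machine $T$; the output of $T$ sits somewhere inside $T_{un}$'s tape in an encoding-dependent way, not at positions $-k,\dots,k$. This is precisely what Lemma~\ref{lem:TuringReads} is for, and its content is different from what you describe: it builds, from $T$ and the target string $t^*$, a new machine $T'$ that runs $T$ and then checks the output \emph{internally}, halting iff $T$ halts with the prescribed window. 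One then feeds $T'$ (not $T$) to $T_{un}$, so that $U$ only needs to detect $q^{un}_{halt}$ and is in fact independent of $T$, the input, and $t^*$; all of that dependence is absorbed into the point $p$. This also dissolves the restart worry above: ``orbit meets $U$'' becomes literally ``$T_{un}$ ever reaches $q^{un}_{halt}$'', i.e.\ ``$T'$ halts'', regardless of what the extended dynamics does afterwards. Correspondingly, your reading of Remark~\ref{R:Tao} is inverted: in the paper's construction $U$ is fixed and it is $p$ that carries the output string, not the other way around.
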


\begin{proof}
The first observation is that there are several constructions of reversible universal Turing machines. For instance, in~\cite{MY} there is an explicit construction with $17$ states and an alphabet of $5$ symbols. In fact, it is known~\cite{Be} that for any Turing machine (and in particular for a universal one) there is a reversible Turing machine doing the same computations. Hence, let us denote by $T_{un}$ some reversible universal Turing machine. By Lemma~\ref{lem:TuringGS}, we can associate to $T_{un}$ a conjugated generalized shift $\phi$, which is, in fact, bijective in view of Lemma~\ref{lem:revTur}. Applying Proposition \ref{prop:GSarea}, we can construct an area-preserving diffeomorphism $\varphi$ of the disk $D$ which is the identity in a neighborhood of $\partial D$ and whose restriction to the square Cantor set is conjugated to $\phi$.

We claim that the map $\varphi$ is Turing complete. Indeed, given a Turing machine $T=(Q,q_0,q_{halt},\Sigma,\delta)$ and a finite part of the output tape $(t_{-k}^*,...,t_k^*)\in \Sigma^{2k+1}$, Lemma~\ref{lem:TuringReads} below allows us to construct another Turing machine $T'$ which reads the output of $T$. Since $T_{un}$ is universal, it can simulate the evolution of $T'$. In particular, given an input $(q,t)$ of $T'$ there is an explicit input $(\widehat q,\widehat t)$ of $T_{un}$, with $\widehat q \in Q_{un}$ and $\widehat t \in \Sigma_{un}^{\mathbb{Z}}$ (here, $Q_{un}$ and $\Sigma_{un}$ are the space of states and the alphabet of $T_{un}$, respectively), such that $T'$ halts with the aforementioned input if and only if $T_{un}$ halts with input $(\widehat q,\widehat t)$.

As explained in the proof of Lemma~\ref{lem:TuringGS} we obtain an explicit sequence $s=(...s_{-1}.s_0s_1...)\in A^{\Z}$, where $A= \Sigma_{un}\cup Q_{un}$, from the input $(\widehat q,\widehat t)$ of $T_{un}$, which defines a unique point $p\in D$ via the correspondence introduced in Section~\ref{SS.genshift}. For example, taking $\{0,1\}$ to be the alphabet $A$ of the generalized shift $\phi$ (which can always be done as mentioned before), then the coordinates of $p$ are given by

$$p=(x,y)=(\sum_{i=1}^\infty \frac{2s_{-i}}{3^i}, \sum_{i=1}^\infty \frac{2s_{i-1}}{3^i} )\,.$$

(In general, it is an expansion in base $2r+1$ where $r$ is the number of symbols of the alphabet.) Finally, we can take the open set $U$ to be a neighborhood of the Cantor blocks (and hence a disjoint set) that contain all the points associated to sequences in $A^{\Z}$ whose symbols in positions $-1,0,1$ are of the form $(a_{-1}q_{halt}^{un}a_{1})$ for some $a_{\pm 1}\in A$. Here $q_{halt}^{un}$ is the halting state of $T_{un}$. Of course, this set $U$ exists because the Cantor blocks are pairwise disjoint, and it is explicitly constructible. This completes the proof of the theorem.
\end{proof}

\begin{Remark}\label{R:Tao}

There is a key technical difference between the diffeomorphism $\varphi$ we construct in Theorem~\ref{thm:areaTuring} and the Turing complete diffeomorphism of $\mathbb T^4$ constructed in~\cite{T1}. In Tao's construction, the point $p$ depends only on the Turing machine $T$ and the input $(q,t)$. Then, for any given finite string $t^*:=(t_{-k}^*,...,t_k^*)$ there is some open set $U_{t^*}$ such that the orbit through $p$ intersects $U_{t^*}$ if and only if $T$ halts with input $(q,t)$ and output whose positions $-k,...,k$ correspond to $t^*$. In contrast, in the diffeomorphism $\varphi$ we construct in Theorem~\ref{thm:areaTuring}, the point $p$ depends on all the information: the Turing machine $T$, the input $(q,t)$ and the finite string $t^*=(t_{-k}^*,...,t_k^*)$. In particular, if we pick another finite string $t_2^*$, the point $p$ will be different. Additionally, $U$ is always the same, i.e., a neighborhood of those blocks associated to the halting state of $T_{un}$.
\end{Remark}

Finally, we prove the lemma that is used in the proof of Theorem~\ref{thm:areaTuring}: given a Turing machine $T$ and a finite string $(t_{-k}^*,...,t_k^*)$, one can construct a Turing machine $T'$ which halts with a given input if and only if $T$ halts with the same input and with the output tape having in positions $(-k,...,k)$ the fixed symbols $(t_{-k}^*,...,t_k^*)$. This is intuitively clear, one simply needs to construct a machine $T'$ that works exactly as $T$, but when $T$ reaches the halting state, $T'$ reads the positions $-k,...k$ to compare with $(t_{-k}^*,...,t_k^*)$. This is formalized in the following lemma (which is probably standard in the theory of Turing machines).

\begin{lemma}\label{lem:TuringReads}
Let $T=(Q,q_0,q_{halt},\Sigma,\delta)$ be a Turing machine. For any $k\geq 0$ and finite string $(t_{-k}^*,..,t_{k}^*) \in \Sigma^{2k+1}$, there is a Turing machine $T'$ which halts with input $(q_0,t)$ if and only if the machine $T$ with input $(q_0,t)$ halts with coefficients $t_{-k}^*,...,t_k^*$ in positions $-k,...,k$ in the output tape.
\end{lemma}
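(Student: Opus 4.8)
The plan is to obtain $T'$ by letting it run exactly as $T$ until $T$ would halt, and then, instead of halting, executing a finite \emph{verification module} that reads the cells $-k,\dots,k$ of the output tape and halts if and only if they agree with $t_{-k}^*,\dots,t_k^*$. Concretely, I would set $T'=(Q',q_0,q_{halt}',\Sigma,\delta')$ with the \emph{same} alphabet $\Sigma$ and the \emph{same} initial state $q_0$ as $T$, where $Q'$ is $Q$ together with finitely many new states: shift states $g_1,\dots,g_{k-1}$, comparison states $c_{-k},c_{-k+1},\dots,c_k$, a genuine halting state $q_{halt}'$, and a ``trap'' state $q_{loop}$. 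I would declare $\delta'$ to coincide with $\delta$ on $(Q\setminus\{q_{halt}\})\times\Sigma$, so that $T'$ mimics $T$ step by step; the only change is that the former halting state $q_{halt}$ of $T$ becomes an ordinary (non-halting) state of $T'$ from which the module is launched.

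For the module I use the shift conventions of Section~\ref{SS.Turing}: a step with $\varepsilon=-1$ (right shift) re-indexes the tape so the symbol previously at position $-1$ occupies position $0$, and a step with $\varepsilon=1$ (left shift) brings the symbol previously at position $1$ to position $0$; since the head always reads position $0$, when $T$ enters $q_{halt}$ the current tape \emph{is} the output tape of $T$ read at position $0$. I then put, for every $\sigma\in\Sigma$, $\delta'(q_{halt},\sigma)=(g_1,\sigma,-1)$, $\delta'(g_i,\sigma)=(g_{i+1},\sigma,-1)$ for $1\le i\le k-2$, and $\delta'(g_{k-1},\sigma)=(c_{-k},\sigma,-1)$ (for $k=1$ this is just $\delta'(q_{halt},\sigma)=(c_{-1},\sigma,-1)$, and for $k=0$ one skips these states altogether and sets $\delta'(q_{halt},\sigma)=(q_{halt}',\sigma,0)$ if $\sigma=t_0^*$ and $(q_{loop},\sigma,0)$ otherwise). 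After these $k$ right shifts the symbol originally at position $-k$ of the output tape sits at position $0$, and nothing has been overwritten because every transition keeps $t_0'=t_0$. From $c_j$, $-k\le j\le k$, reading $\sigma$: if $\sigma=t_j^*$ and $j<k$, set $\delta'(c_j,\sigma)=(c_{j+1},\sigma,1)$ (a left shift, bringing the next output cell into view); if $\sigma=t_j^*$ and $j=k$, set $\delta'(c_k,\sigma)=(q_{halt}',\sigma,0)$; and if $\sigma\neq t_j^*$, set $\delta'(c_j,\sigma)=(q_{loop},\sigma,0)$. Finally $\delta'(q_{loop},\sigma)=(q_{loop},\sigma,1)$ for all $\sigma$, so from $q_{loop}$ the machine $T'$ can never reach $q_{halt}'$.

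The verification is a short induction on the number of steps: on the common input $(q_0,t)$ the configurations of $T$ and $T'$ agree step for step for as long as $T$ has not reached $q_{halt}$, since $\delta'=\delta$ there. Hence if $T$ never halts, $T'$ stays forever among the states of $Q\setminus\{q_{halt}\}$ and never reaches $q_{halt}'$. If $T$ halts, then at that moment $T'$ holds the output tape $t^{out}$ of $T$ with head at position $0$; the module performs $k$ right shifts and then scans leftward, and at state $c_j$ it reads exactly $t^{out}_j$ and compares it with $t_j^*$. If all $2k+1$ comparisons succeed, $T'$ enters $q_{halt}'$ and halts; if any fails, $T'$ enters $q_{loop}$ and runs forever. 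Therefore $T'$ halts on $(q_0,t)$ if and only if $T$ halts on $(q_0,t)$ with $t^{out}_{-k},\dots,t^{out}_k$ equal to $t_{-k}^*,\dots,t_k^*$, as required.

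There is no genuine obstacle here, since the statement is a standard piece of Turing-machine programming; the only care needed is bookkeeping. One must fix the shift conventions consistently so that the module visits the cells $-k,\dots,k$ in the right order, ensure the module never overwrites a cell it has not yet inspected (arranged by always writing $t_0'=t_0$), and ensure the failure branch is genuinely non-halting (arranged by the self-looping trap state $q_{loop}$). The degenerate case $k=0$ and counting the correct number $k-1$ of intermediate shift states are the only slightly fiddly points.
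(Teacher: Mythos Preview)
Your proof is correct and follows essentially the same approach as the paper: append a finite verification module with fresh ``reading'' states and a non-halting trap state that is entered on any mismatch. The only cosmetic differences are that the paper intercepts the transition \emph{into} $q_{halt}$ (keeping $q_{halt}$ as the halting state of $T'$) and scans in the order $0,-1,\dots,-k,-k+1,\dots,k$ (rechecking some cells), whereas you demote $q_{halt}$ to an ordinary state, add a fresh $q_{halt}'$, and first move blindly to position $-k$ before scanning once through $-k,\dots,k$; neither change affects the argument.
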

\begin{proof}
Fix a Turing machine $T=(Q,q_0,q_{halt},\Sigma,\delta)$ and a finite string $(t_{-k}^*,...,t_k^*)$. As before, $F_1,F_2,F_3$ denote the three components of the transition function $\delta: Q\times \Sigma \rightarrow Q\times \Sigma \times \{-1,0,1\} $. To define the Turing machine $T'$, take as alphabet $\Sigma':=\Sigma$ and as set of states $Q':=Q\sqcup \{ r_0,...,r_{3k},q_{nohalt}\}$, where $r_j$ and $q_{nohalt}$ simply denote new states we include in the space. The initial and halting states of $T'$ are the same as for $T$. The idea is to use the states $r_0,...,r_{3k}$ as ``reading states'' that will check if the final output is the desired one.

Let us denote the current state of the Turing machine by $(q,t)$ and by $t_0$ the symbol in the central position. The transition function $\delta'$ can be defined as follows. For $q\in Q\setminus \{q_{halt}\}$, if $F_1(q, t_0)\in Q\setminus \{q_{halt}\}$ then we set $\delta'(q,t_0):=\delta(q,t_0)$ and if $F_1(q,t_0)=q_{halt}$ we define $\delta'(q,\tilde t):=(r_0,F_2(q,t_0),F_3(q,t_0))$. This way, when $T$ reaches a halting state, $T'$ will reach the state $r_0$.

Now we define the transition function $\delta'$ for $q\in \{r_0,...,r_{3k},q_{nohalt}\}$ as:

\begin{equation}\label{eq:r1}
\delta'(r_i,t_0):=
\begin{cases}
(r_{i+1},t_0, -1) \text{ if } t_0=t_{-i}^* \\
(q_{nohalt},t_0,0) \text{ otherwise}
\end{cases}, \text{ for }i=0,...,k-1\,.
\end{equation}

\begin{equation}\label{eq:r2}
\delta'(r_i,t_0):=
\begin{cases}
(r_{i+1},t_0, +1) \text{ if } t_0=t_{i-2k}^* \\
(q_{nohalt},t_0,0) \text{ otherwise}
\end{cases}, \text{for }i=k,...,3k-1\,,
\end{equation}

and $\delta'(r_{3k},t_0 ):=(q_{halt}, t_0,0)$ if $t_0=t_{k}^*$ and $(q_{nohalt},t_0,0)$ otherwise. Finally, we define $\delta'$ for $q_{nohalt}$ so that the machine gets trapped in a loop, e.g. we can set $\delta'(q_{nohalt}, t_0):=(q_{nohalt}, t_0,0)$ for any symbol $t_0$.

Let us check that $T'$ satisfies the required property. Suppose that $T$ halts with a given input $(q_0,t)$. Denote by $t^h:=(...t_{-1}^h.t_0^ht_1^h...)$ the output tape of $T$, i.e., the tape when $T$ reaches the halting state. By the construction, the machine $T'$ with input $(q_0,t)$ will reach the state $r_0$ with tape $t^h$ instead of halting. By Equation~\eqref{eq:r1}, if $t_0^h=t_0^*$ the machine will shift the tape to the right and change the current state to $r_1$. If the symbol $t_0^h$ does not correspond to $t_0^*$, then $T$ enters a loop through the state $q_{nohalt}$ and will never halt.

After shifting to the right, the current tape is now $(...t_{-2}^h.t_{-1}^ht_0^h...)$ and the current state is $r_1$. Again by Equation~\ref{eq:r1}, the machine enters a loop unless $t_{-1}^h=t_{-1}^*$, in which case we shift to the right and change to state $r_2$. Iterating this process, the machine reaches the state $r_k$ if and only if $t_{-i}^h=t_{-i}^*$ for each $i=0,1,...,{k-1}$. The current tape is then $(...t_{-(k+1)}^h.t_{-k}^h...)$. Similarly, by Equation~\eqref{eq:r2} for states $r_j$ with $j=k,...,{3k-1}$, at each step the machine is at the state $r_j$ with current tape $(...t_{j-2k-1}^h.t_{j-2k}^h...)$, and it checks if $t_{j-2k}^h=t_{j-2k}^*$, in which case it shifts to the left with new state $r_{j+1}$. Finally, the machine reaches the state $r_{3k}$ if and only if $(t_{-k}^h....t_{k-1}^h)=(t_{-k}^*...t_{k-1}^*)$, and the current tape becomes $(...t_{k-1}^h.t_{k}^h...)$. By the definition of $\delta'$ at $r_{3k}$, the machine halts if and only if $t_k^h=t_k^*$ or else enters a loop. It is then obvious that $T'$ halts with input $(q_0,t)$ if and only if $T$ halts and its output satisfies that $(t_{-k}^h....t_k^h)=(t_{-k}^*...t_k^*)$, which completes the proof of the lemma.
\end{proof}

\section{Turing completeness of fluid flows}\label{S.Euler}

In this last section we use the Turing complete area-preserving diffeomorphism constructed in Theorem~\ref{thm:areaTuring} to establish the existence of an Eulerisable field in $\mathbb S^3$ which is Turing complete. In the proof we use Etnyre-Ghrist's contact mirror, cf. Theorem~\ref{correspondence}, and the realization Theorem~\ref{prop:Reebdisk} which allows one to embed a diffeomorphism of the disk as the return map of a Reeb flow.

\subsection{Embedding diffeomorphisms as cross sections of Beltrami flows}

In~\cite{CMPP} we constructed a Turing complete Eulerisable flow on $\mathbb S^{17}$ using a new $h$-principle for Reeb embeddings; the dimension $17$ is essentially sharp with this approach. In contrast, the ideas we introduced in Sections~\ref{S:contact},~\ref{S:Turing} and~\ref{S:diffeo} allow us to reduce the dimension to $3$, as shown in the following theorem, which is the main result of this work.

\begin{theorem}\label{thm:main}
There exists an Eulerisable flow $X$ in $\mathbb S^3$ that is Turing complete in the following sense. For any integer $k\geq 0$, given a Turing machine $T$, an input tape $t$, and a finite string $(t_{-k}^*,...,t_k^*)$ of symbols of the alphabet, there exist an explicitly constructible point $p\in \mathbb S^3$ and an open set $U\subset \mathbb S^3$ such that the orbit of $X$ through $p$ intersects $U$ if and only if $T$ halts with an output tape whose positions $-k,...,k$ correspond to the symbols $t_{-k}^*,...,t_k^*$. The metric $g$ that makes $X$ a stationary solution of the Euler equations can be assumed to be the round metric in the complement of an embedded solid torus.
\end{theorem}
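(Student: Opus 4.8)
The plan is to feed the Turing complete area-preserving diffeomorphism $\varphi\colon D\to D$ of Theorem~\ref{thm:areaTuring} into the realization Theorem~\ref{prop:Reebdisk}, and then into the Etnyre--Ghrist correspondence (Theorem~\ref{correspondence}). I would work on $\mathbb S^3$ with the standard contact structure $\xi_{std}=\ker\alpha_H$, where $\alpha_H=g_0(X_H,\cdot)$ is the form dual via the round metric $g_0$ to the Hopf field $X_H$; recall that $X_H$ is a rotational Beltrami field with constant factor and that $\alpha_H\wedge d\alpha_H=\mathrm{vol}_{g_0}$. Near a Hopf fibre $\gamma$ there are coordinates $(r',\phi,\theta)$ on a solid torus in which $\alpha_H=\tfrac12(d\theta+r'^2d\phi)$, so (for the large constant $C$ of Theorem~\ref{prop:Reebdisk}) the form $\beta:=2C\,\alpha_H$ globally defines $\xi_{std}$ and equals $\beta_0=C(d\theta+r'^2d\phi)$ near $\gamma$. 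Applying Theorem~\ref{prop:Reebdisk} together with Remark~\ref{Rem:fixcont} to $(\mathbb S^3,\xi_{std})$, $\varphi$, $\gamma$ and $\beta$, I obtain a defining contact form $\alpha$ on $\mathbb S^3$ that equals the model $\widetilde\alpha'$ on a solid torus $V=D_\rho\times\mathbb S^1$ around $\gamma$ and equals $\beta$ off a slightly larger solid torus; its Reeb field $R$ is a reparametrization of $\partial_\theta+X_{Ham}$ on $V$, with $X_{Ham}$ compactly supported in the interior of the slices, and therefore admits $\Sigma:=D_\rho\times\{0\}$ as a cross-section with first return map $\psi=\Phi_\rho\circ\varphi\circ\Phi_\rho^{-1}$, $\Phi_\rho(r,\phi,\theta)=(\rho r,\phi,\theta)$.

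Next I would produce the metric. By the Reeb $\Rightarrow$ Beltrami direction of Theorem~\ref{correspondence}, $R$ is a non-vanishing rotational Beltrami field for some metric $g$ with volume $\mathrm{vol}_g$; since $\iota_Rd\alpha=0$, the Bernoulli function is forced to be constant, so $X:=R$ is a stationary solution of the Euler equations, i.e.\ Eulerisable on $\mathbb S^3$. For the last assertion I would use the flexibility of that construction: one builds $g$ by prescribing $g(R,\cdot)=\lambda\,\alpha$ with $\lambda>0$ constant, $g|_\xi=\sigma\,d\alpha(\cdot,J\cdot)$ with $\sigma>0$ constant and $J$ a $d\alpha$-compatible complex structure on $\xi=\ker\alpha$, so that $\mathrm{vol}_g$ is a constant multiple of $\alpha\wedge d\alpha$ and $R$ is Beltrami with constant factor. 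Off $V$ one has $\alpha=\beta=2C\alpha_H$; choosing $\lambda$, $\sigma$ and $J=J_0$ (the complex structure induced by $g_0$) so that these prescriptions reproduce $g_0$ there — a short computation using $\alpha_H\wedge d\alpha_H=\mathrm{vol}_{g_0}$ — forces $g=g_0$ outside $V$, while inside $V$ one interpolates $J$ back to $J_0$ near $\partial V$, which is possible since the space of $d\alpha$-compatible complex structures on $\xi|_V$ is contractible. Hence $g$ can be taken to be the round metric on the complement of the embedded solid torus $V$.

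It remains to transfer Turing completeness. Let $h\colon D\to\Sigma\subset\mathbb S^3$, $h(r,\phi)=(\rho r,\phi,0)$, so that $\psi=h\circ\varphi\circ h^{-1}$ and $\psi^n(h(p_0))=h(\varphi^n(p_0))$. Given $T$, $t$ and $(t^*_{-k},\dots,t^*_k)$, let $p_0\in D$ and $U_0\subset D$ be the explicitly constructible point and open set of Theorem~\ref{thm:areaTuring}; put $p:=h(p_0)$ and let $U:=\{\Phi^s_X(x):x\in h(U_0),\ |s|<\varepsilon\}$ be a small flowbox over $h(U_0)$, an open and explicitly constructible subset of $V$. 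The solid torus $V$ is $X$-invariant (near $\partial V$ the tori $\{r'=\mathrm{const}\}$ are invariant, and $X_{Ham}$ is tangent to the disk slices), $\Sigma$ is a global section of $X|_V$, and the orbit of $X$ through $p$ meets $\Sigma$ precisely at the points $\psi^n(p)=h(\varphi^n(p_0))$; by the flowbox property this orbit meets $U$ if and only if $\varphi^n(p_0)\in U_0$ for some $n$, which by Theorem~\ref{thm:areaTuring} happens if and only if $T$ halts with $t^*_{-k},\dots,t^*_k$ in positions $-k,\dots,k$ of its output. This proves the theorem.

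None of these steps is deep given the tools already established; the only point requiring genuine care is the metric bookkeeping in the second paragraph — arranging the Beltrami metric to be \emph{exactly} round outside $V$, rather than merely some metric realizing $R$ as a Beltrami field — together with the elementary verification that $V$ is flow-invariant, which is what makes the flowbox argument in the last paragraph faithfully detect halting.
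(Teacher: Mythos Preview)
Your proposal is correct and follows essentially the same route as the paper's own proof: realize the Turing complete disk map of Theorem~\ref{thm:areaTuring} as a Reeb first-return map via Theorem~\ref{prop:Reebdisk} together with Remark~\ref{Rem:fixcont}, invoke the contact/Beltrami correspondence of Theorem~\ref{correspondence}, and transfer the halting criterion to the flow by a flowbox over the cross-section. The only difference is that you spell out the metric bookkeeping needed to get $g=g_0$ off $V$, which the paper simply attributes to~\cite{CMPP}; one harmless caveat is that for the unit Hopf field on the round $\mathbb S^3$ one has $\alpha_H\wedge d\alpha_H=2\,\mathrm{vol}_{g_0}$ rather than $\mathrm{vol}_{g_0}$, which only shifts your constants $\lambda,\sigma$.
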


\begin{proof}
By~Theorem \ref{thm:areaTuring}, there exists a Turing complete area-preserving diffeomorphism $\varphi$ of the disk which is the identity in a neighborhood of the boundary. Take the standard contact sphere $(\mathbb S^3,\xi_{std})$ and apply Theorem~\ref{prop:Reebdisk} to obtain a defining contact form $\alpha$ whose Reeb field $X$ exhibits an invariant solid torus $\mathcal T$ where the first return map on a disk section is conjugated to $\varphi$ via a diffeomorphism $\Phi:D\to D$. By Remark~\ref{Rem:fixcont} we can assume that in the complement of a neighborhood $V$ of $\overline {\mathcal T}$, the one form $\alpha$ coincides with the standard contact form $\alpha_{std}$ of $\mathbb S^3$. In particular, $X$ coincides with a Hopf field in $\mathbb S^3\backslash V$. When applying the contact/Beltrami correspondence in Lemma~\ref{correspondence}, we obtain a Riemannian metric $g$ which coincides with the round one (as done also in~\cite{CMPP}) on $\mathbb S^3\backslash V$. By construction of the metric, $X$ satisfies the equation $\operatorname{curl}_g X = X$, so it is a stationary solution of the Euler equations on $(\mathbb S^3,g)$.

Finally, let us check that $X$ satisfies the stated Turing completeness property. Take a Turing machine $T$ with an input $t$ and a finite string $(t_{-k}^*,...,t_k^*)$ of symbols. Denoting by $D_0= \{0\} \times D\subset \mathbb S^3$ the transverse section in $\mathcal T$ where the first return map of $X$ is conjugated to $\varphi$, we can find the point $p\in D_0$ and the set $U_0\subset D_0$ (open as a subset of $D$) defined as $p:=\Phi(p_*)$ and $U_0:=\Phi(U_*)$, where $p_*$ and $U_*$ are, respectively, the point and open set given by Theorem~\ref{thm:areaTuring}. We then take the open set
$$U:=\bigcup_{t\in(-\varepsilon_0,\varepsilon_0)}\phi_t(U_0)\,,$$
where $\varepsilon_0>0$ is a small enough constant, and $\phi_t$ is the flow defined by $X$. It is then clear that the point $p\in \mathbb S^3$ and the set $U\subset \mathbb S^3$ satisfy that $T$ will halt with the given output positions if and only if the orbit of $X$ through $p$ intersects the open set $U$, thus completing the proof of the theorem.
\end{proof}

\subsection{Final remark: the Navier-Stokes equations}
%
%

The Navier-Stokes equations describe the dynamics of an incompressible fluid flow with viscosity. On a Riemannian $3$-manifold $(M,g)$ they read as
\begin{equation}\label{eq:NS}
\begin{cases}
\pp{u}{t}+ \nabla_u u-\nu \Delta u=-\nabla p\,,\\
\operatorname{div} u=0\,, \\
u(t=0)=u_0\,,
\end{cases}
\end{equation}
where $\nu>0$ is the viscosity. Here all the differential operators are computed with respect to the metric $g$, and $\Delta$ is the Hodge Laplacian (whose action on a vector field is defined as $\Delta u:=(\Delta u^\flat)^\sharp$).

In this final section we analyze what happens with the vector field $X$ constructed in Theorem~\ref{thm:main} when taken as initial condition for the Navier-Stokes equations with the metric $g$ that makes $X$ a steady Euler flow. Specifically, using that $\operatorname{curl}_g(X)=X$, the solution to Equation~\eqref{eq:NS} with $u_0=M X$, $M>0$ a real constant, is easily seen to be
\begin{equation}
\begin{cases}
u(\cdot,t)=MX(\cdot)e^{-\nu t}\,, \\
p(\cdot,t)= c_0 - \frac{1}{2}M^2e^{-2\nu t} \norm{X}^2_g\,,
\end{cases}
\end{equation}
for any constant $c_0$. The integral curves (fluid particle paths) of the non-autonomous field $u$ solve the ODE
$$\frac{dx(t)}{dt}=M e^{-\nu t}X(x(t))\,.$$
Accordingly, reparametrizing the time as
$$\tau(t):=\frac{M}{\nu}(1-e^{-\nu t})\,,$$
we show that the solution $x(t)$ can be written in terms of the solution $y(\tau)$ of the ODE
\[
\frac{dy(\tau)}{d\tau}=X(y(\tau))\,,
\]
as
\[
x(t)=y(\tau(t))\,.
\]

When $t\rightarrow \infty$ the new ``time'' $\tau$ tends to $\frac{M}{\nu}$, and hence the integral curve $x(t)$ of the Navier-Stokes equations travels the orbit of $X$ just for the time interval $\tau\in [0,\frac{M}{\nu})$. In particular, the flow of the solution $u$ only simulates a finite number of steps of a given Turing machine, so we cannot deduce the Turing completeness of the Navier-Stokes equations using the vector field $MX$ as initial condition. More number of steps of a Turing machine can be simulated if $\nu\to 0$ (the vanishing viscosity limit) or $M\to \infty$ (the $L^2$ norm of the initial datum blows up). For example, to obtain a universal Turing simulation we can take a family $\{M_k X\}_{k\in\mathbb N}$ of initial data for the Navier-Stokes equations, where $M_k\rightarrow \infty$ is a sequence of positive numbers. The energy ($L^2$ norm) of this family is not uniformly bounded, thus raising the challenging question of whether there exists an initial datum of finite energy that gives rise to a Turing complete solution of the Navier-Stokes equations.

\end{document}